\makeatletter \@addtoreset{equation}{section}
\newcommand{\beq}{\begin{equation}} % \label{#1}}
\newcommand{\eeq}{\end{equation}}
\newcommand{\bed}{\begin{displaymath}}
\newcommand{\eed}{\end{displaymath}}
\newcommand{\ben}{\begin{eqnarray*}}
\newcommand{\een}{\end{eqnarray*}}
\def\F{{\cal F}}
\def\op{{\cal L}}
\def\cd{(\cdot)}
\def\M{{\cal M}}
\def\rr{{\mathbb R}}
\newcommand{\sxal}{\ensuremath{s,x,\al}}
\newcommand{\sxalz}{\ensuremath{s_0,x_0,\al_0}}
\newcommand{\dist}{\text{dist}}
\newcommand{\e}{\varepsilon}
\newcommand{\vphi}{{\varphi}}
\newcommand{\sz}{\ensuremath{s_0}}
\newcommand{\xz}{\ensuremath{x_0}}
\newcommand{\al}{\alpha}
\newcommand{\pr}{{\mathbf P}}
\newcommand{\ex}{{\mathbf E}}
\newcommand{\wdt}{\widetilde}
\newtheorem{thm}{Theorem}[section]
\newtheorem{prop}[thm]{Proposition}
\newtheorem{lem}[thm]{Lemma}
\theoremstyle{definition}
\newtheorem{rem}[thm]{Remark}
\newtheorem{defn}[thm]{Definition}
\newtheorem{exm}[thm]{Example}
\newtheorem{assumption}{Assumption A\!\!}
\newcommand{\thmref}[1]{Theorem~{\rm \ref{#1}}}
\newcommand{\lemref}[1]{Lemma~{\rm \ref{#1}}}
\newcommand{\propref}[1]{Proposition~{\rm \ref{#1}}}
\newcommand{\assumpref}[1]{Assumption A{\rm \ref{#1}}}
\newcommand{\exmref}[1]{Example~{\rm \ref{#1}}}
\newcommand{\set}[1]{\left\{#1\right\}}
\newcommand{\abs}[1]{\left\vert #1\right\vert}
\newcommand{\disp}{\displaystyle}
\newcommand{\bedd}{\bed\begin{array}{l}}
\newcommand{\eedd}{\end{array}\eed}
\def\({\left(}
\def\){\right)}
\def\one{{\hbox{1{\kern -0.35em}1}}}
\newcommand{\bea}{\bed\begin{array}{rl}}
\newcommand{\eea}{\end{array}\eed}
\newcommand{\ad}{&\!\!\!\disp}
\newcommand{\aad}{&\disp}
\newcommand{\barray}{\begin{array}{ll}}
\newcommand{\earray}{\end{array}}
\title{Optimal Control of Risk Process in a Regime-Switching Environment}
\author{Chao Zhu\thanks{Department of Mathematical
Sciences, University of Wisconsin-Milwaukee, Milwaukee, WI 53201,
{\tt zhu@uwm.edu}.}}
\begin{document}
\maketitle

\begin{abstract}
This paper is concerned with cost optimization of an insurance company. The surplus of the insurance company is modeled by a controlled regime-switching diffusion,
where the regime-switching mechanism provides the fluctuations of the random environment.
 %  and  the stochastic control models the
  The goal is to find an optimal control that minimizes the total cost up to a stochastic exit time.
A weaker sufficient  condition than that of \cite[Section V.2]{FlemingS} for the continuity of the value function is obtained. Further, the value function is shown to be a viscosity solution of a Hamilton-Jacobi-Bellman equation.

\vskip .1in
{\bf Keywords}. Regime-switching diffusion, continuity of the value function, exit time control, viscosity solution.

\vskip .1in
{\bf AMS subject classification.} 93E20, 60J60.
\end{abstract}

\setlength{\baselineskip}{0.232in}

\section{Introduction}\label{sect-introduction}
Recently %there is an increasing interests in the investigations of
the optimal risk control and dividend distribution problems have   drawn growing attention
from % both researchers and practitioners.
  researchers.
 % of surplus processes,
 % See %, for example,
  Some recent developments %in these problems
   can be found in
\cite{Cadenillas-06,Choulli,Irgens-05,Paulsen-97,Paulsen-05,Schmidli-01,Schmidli-02,Taksar-03,Taksar-07,Touzi-00} and the references therein.
 % for some recent developments in these problems.
In those works, the liquid assets   of the insurance companies are modeled by
some stochastic processes (usually linear diffusions or jump diffusions).
 At any time $t$, the insurance companies can choose different
 business activities such as reinsurance, investment, dividend payment, etc.
 The decisions are based upon the information available to them by time $t$ as well as  the pre-given  economic or political criteria.
  Different business activities lead to different dynamics in the evolution of the surplus and hence
   different economic or political returns.
This sets a scene for an optimal stochastic control model for % the management of
the surplus of the insurance company.
In the literature,
the most commonly used  criteria %to formulate these problems
are:
(i) maximizing expected utility at or up to the time of ruin
 (\cite{Irgens-05,Touzi-00}),
(ii) minimizing the probability of ultimate ruin
(\cite{Schmidli-01,Schmidli-02,Taksar-03}),
 and (iii)
maximizing the cumulative expected discounted dividends (\cite{Cadenillas-06,Choulli,Paulsen-97}).

In contrast to the aforementioned references, in this work, we
 % focus on the problem of cost optimization   for the insurance company.
 % We
 aim to investigate the problem of cost optimization  for an insurance company in a regime-switching environment using
  stochastic analysis and stochastic control theories.
  As we shall see shortly, this is a nonlinear optimal control problems in the setting of regime-switching diffusion.
In addition to the usual operating cost of an insurance company such as corporate debt, bond liability, loan amortization,   etc,
  in practice, almost every insurance claim is accompanied by a certain amount of business cost
resulting from claim appraisal, investigation, settlement negotiation, and so on.
Minimizing the cost may increase the profit of the insurance company and lower premiums for its customers.

We should also note that the word cost can be used in a general sense: it may represent any monetary amount such as claims, penalties, dividends, utilities, and so on.
As illustrated in \cite{Cai-Feng}, % and \cite{Feng-09},
 the total  discounted  cost actually   covers
 a number of ruin-related quantities frequently analyzed in ruin literature such as the expected present value of penalty at ruin and the total dividends under various dividend strategies.
Therefore this work can be applied to a broad range of aspects of risk management
such as utility and cumulative dividends maximization.
%See the detailed discussion in \cite{Cai-Feng,Feng-09}.

Another feature of this work is the consideration of regime-switching. Most of the existing literature on optimal control of risk processes are based on the framework of diffusion approximation model (\cite{Grandell-91}).
That is, the surplus of an insurance company is usually modeled or approximated by a (jump) linear diffusion process.
Roughly speaking, if the surplus of the insurance company is much larger than the individual claims, then the classical homogeneous Poisson model can be approximated by a diffusion model.
Along another line,
\cite{Asmussen-89} proposed a Markovian-modulated risk model. The model  is a hybrid system, in which continuous dynamics   are intertwined with discrete events.
More specifically,
 the evolutions of the surplus (continuous dynamics)  is subject to jumps or switches of the economic or political environment (discrete events), and the dynamics of the surplus in different environments are markedly different.
As demonstrated in \cite{Asmussen-89} (see also \cite{Yang-Yin04}), this model  can capture the features of insurance policies that depend on the economic or political environment changes.
  The states of the discrete event process can model   for example, certain types of epidemics in health insurance,   weather types in automobile insurance,  the El Ni\~{n}o/La Ni\~na phenomenon in property insurance, or economic conditions in unemployment insurance.
 Also, in many practical situations, people are more concerned with the short term results of the business activities. For instance, the manager and/or shareholders of an insurance company want to determine the short-term benefits of a particular business activity.
Inspired by these arguments, we consider a controlled surplus
process modeled by a   regime-switching diffusion (also called a hybrid diffusion in the literature)
 over a finite time horizon $[s,T]$, where $T>0$ is a  fixed constant and $s\in [0,T)$.
  % is the initial time.
  % Moreover,  the surplus
  % is .
  % See Section \ref{sect-formulation} for more details.

 % we consider a surplus process modeled by a regime switching diffusion
 % and its optimal control problem in this work.

By the nature of the risk process, the insurance company  may default at some finite time; at which point we say that the surplus is ruined and denote the ruin time by $\tilde \tau$.
Consequently, we need only to consider the total cost up to the ruin time or the terminal time $T$, whichever comes first.
 That is, our control problem is over the interval $[s, T\wedge \tilde \tau]$, with $T\wedge \tilde \tau= \min \set{T, \tilde \tau}$ being a random time (stopping time).
 This is generally termed as   exit time control or stochastic control with exit time in the literature (\cite{FlemingS}).
 As we shall see in Example \ref{exm-discontinuous-value-function},
the stopping time $T\wedge \tilde \tau$   depends on the initial surplus $X(s) =x$. As a result, the value function defined in \eqref{eq:value-sw}
is not necessarily continuous with respect to $x$.
See Example \ref{exm-discontinuous-value-function} for detailed discussions.
   %This is called the {\em tangency problem} in \cite{Kushner-D}.
Then, a problem of great interest is: Under what condition(s), is the value function continuous?
\cite[p. 202]{FlemingS} proposed a condition on the drift of the underlying (1-dimensional) controlled diffusion under which the continuity of the value function is guaranteed.
The condition in \cite{FlemingS} was recently generalized in \cite{Bayraktar} by considering both the drift and the diffusion coefficients.
In this work, under the more general setting of controlled regime-switching diffusion, we propose a new condition in terms of the regularity of the boundary point to obtain the continuity of the value function.
Our condition is another generalization of the one in \cite{FlemingS}.
 % and possibly applicable in more situations than that of \cite{Bayraktar}
 % if the underlying controlled diffusion is multi-dimensional.
See \thmref{thm-continuity}, \propref{prop-about-A3}, and Example \ref{exm2} for more details.

We emphasize that the continuity of the value function is very important
and useful for the following reasons. Firstly, with the continuity of the value function, one has the dynamic programming principle
(\cite{FlemingS}), which, in turn, helps to establish the viscosity solution property for the value function.
Secondly, the continuity of the value function plays a vital role in the study of
numerical approximation to the value function.
To illustrate, let $X$ be a controlled continuous-time stochastic process and $V$ denotes the associated value function.
 Typically, one constructs a  controlled locally consistent discrete sequence $\{X_n^h, n\in \mathbb N\}$ and find the associated value function $V^h$ for the
discrete problem, where $h>0$ is the stepsize of the discretization.
If the value function $V$ is not continuous, then as $h\downarrow 0$, $V^h$ may not converge to   $V$,
even though  $X^h$ converges to $X$ in some suitable sense,
  where $X^h$ is  the   continuous parameter interpolated process of  $\set{X_n^h, n\in \mathbb N}$. In other words, $X^h$ approximates $X$, but $V^h$ may still differ  significantly from $V$.
  See Example \ref{exm-discontinuous-value-function} and also \cite{Kushner-D} for more details.
  In fact, this work is largely motivated by this aspect of consideration.

%The classical verification approach consists in finding a smooth solution to the
%HJB equation, and to check that this candidate, under suitable sufficient conditions,
%coincides with the value function. This result is usually called a verification
%theorem and provides as a byproduct an optimal control. It relies mainly on Itˆo's
%formula.

The classical approach to stochastic control problem is the verification theorem. % see for example \cite{FlemingR}.
Typically, this approach requires   the value function to be  sufficiently smooth.
% If the value function is sufficiently smooth, then
In such a case, it can be shown that  the value function
is a {\em classical} solution of a nonlinear partial differential equation of Hamilton-Jacobi-Bellman type.
See \cite{YongZ} for details.
However, the smoothness assumption is rather restrictive.
As we mentioned earlier, there are many examples where the value function is not necessarily sufficiently smooth.
In fact, due to the dependence of the terminal time $T\wedge \tilde\tau$ on the initial data $X(s)=x$,
 the value function in our setup may  not be even continuous.
 Then how can we characterize the value function?
% Therefore the verification   approach does not apply in our problem.
%Instead,
In this paper, we use the notion of   viscosity solution.
 %Having established continuity for the value function,
With the aid of the dynamic programming principle, we prove that  the value function is a {\em viscosity} solution of the Hamilton-Jacobi-Bellman equation.
Since the seminal work \cite{Crandall-Lions-83,CIL92,Lio83b} and others, the viscosity solution characterization have been exploited   in many  control problems under various settings.
But the related results in the content of regime-switching diffusion is relatively scarce. Moreover,
 the proof for our case is not a trivial extension of the existing results. The presence of regime-switching adds much difficulty in the proof.
 See \thmref{thm-viscosity-soln} for more details.

At this point, it is   worth mentioning that thanks  to the  capability of  delineating  the inherent
randomness of many real-world applications, regime-switching diffusions
   have been used in
   a wide range of areas such as finance, biology, insurance, etc.
 % In addition to the aforementioned \cite{Asmussen-89,Yang-Yin04},
 See for instance \cite{Mariton,Hespanha05,MaoY,YZ-10}  and references therein for many prototypical  examples of applications of such stochastic processes.
  On the control aspect,
 % Note also there are several papers on optimal control topic of linear
 optimal control problems of regime-switching diffusions have received growing attention lately.
 To name just a few, \cite{Li-03,Rami-01,Taksar-Z,YinLZ,Zhou-Yin,Song-S-Z} among others.
% is relatively scarce in the literature.
% Following this trend,
%This paper is devoted to a nonlinear optimal control problems in the setting of regime-switching diffusion.
%It aims to apply  stochastic analysis and stochastic control theories
%to investigate the problem of cost minimization for an insurance company in a regime-switching environment.

%The specific contributions of this paper can be summarized as follows:
%\begin{itemize}
%\item Regime switching model
%\item condition for continuity
%\item characterize the value function via the viscosity solution approach.
%\end{itemize}

The rest of the paper is arranged as follows. We formulate the problem in Section \ref{sect-formulation}.
  Section \ref{sect-continuity} is devoted to the continuity of the value function. We obtain several sufficient conditions for continuity.
In Section \ref{sect-viscosity}, we show that the value function is a viscosity solution of the HJB equation \eqref{eq:HJB}.
We conclude the paper with a few remarks in Section \ref{sect-conclusions}.
Appendix \ref{sect-appendix} provides a result on regularity of the boundary point.

A few words about notations are necessary at this point. We use $I_A$ to denote the indicator function of a set $A$.
If $a, b \in \rr $, then $a\wedge b :=\min\set{a,b}$.   Throughout the paper, $K$ is a generic positive constant whose exact value may differ in different appearances. For any $x_0 \in \rr$ and $r>0$, $B(x_0,r)= \set{x\in \rr: \abs{x-\xz} < r}$.

\section{Problem Formulation}\label{sect-formulation}
Let $X(t)$ denote the surplus of a large insurance company at time $t \in
[s,T]$, where $T> 0$ is fixed and $s \in [0,T)$.
As we indicated in Section \ref{sect-introduction}, the surplus process $X$ often displays abrupt changes according to
 different economic,  political, or natural environments facing the insurance
company.   Following \cite{Asmussen-89}, we use a continuous time Markov chain
$\al\cd$ to model the variations of the external environments of the insurance company.
For simplicity, we assume the Markov chain has a finite state space $\M=\set{1, \dots,
m}$ and is  generated by $Q=(q_{ij})$,
 that is,
\beq\label{Q-gen} \pr\set{\al(t+ \Delta t)=j|
\al(t)=i,\al(s),s\le t}=\begin{cases}q_{ij}
\Delta t + o(\Delta t),\ &\hbox{ if }j\not= i,\\
1+ q_{ii}\Delta t + o(\Delta t), \ &\hbox{ if }j=i,
\end{cases}\eeq where $q_{ij}\ge 0$ for $i,j=1,\dots,m$
with $j\not= i$ and $\sum^m_{j=1}q_{ij}=0$ for each $i=1,\dots,m$.
At any time $t$, we denote by  $u(t)$  one of  the possible  business activities
available for the insurance company. For instance, $u(t)$ may represent
a reinsurance strategy, an investment plan, or a dividend payment scheme, etc.

Suppose $X$ satisfies the following stochastic differential equation with regime
switching:
\begin{equation}\label{eq:risk-sw} dX(t) = b(t,X(t),\al(t),u(t)) dt + \sigma(t,
X(t),\al(t), u(t)) dw(t),\end{equation}
with initial conditions
\begin{equation}\label{eq:initial}
X(s) =x>0 \ \text{ and } \ \al(s) =\al\in \M, \end{equation}
where $w\cd$ is a standard Brownian motion independent of the Markov chain $\al\cd$.
Note that the independence between $w$ and $\al$ is a standard assumption in the
literature (\cite{MaoY}). Denote
$ {\cal F}_t :=\sigma\set{w(r),\al(r): 0\le r \le t}.$
Without loss of generality, we assume that $\F_0$ contains all
$\pr$-null sets.
Suppose throughout the paper that the control policy $u$ is
  $\{\F_t\}$-adapted and that for any $t$,
  $   u(t)  \in {U} $, where  $U$ is a compact subset of $\rr$.
Any control $u$ satisfying the above conditions is
  said to be an {\em admissible control}. Let $\cal U$ denote the collection of all
  admissible controls.

 Let $\tilde \tau:=\inf\set{t\ge s: X(t)=0}$ denote  the ruin time
  and set $\tau:=T \wedge \tilde \tau $.
  % \begin{equation}\label{eq-tau-defn}
  % \tau:=T \wedge \tilde
  % \tau =\set{t \ge s: (t,X(t)) \notin [0,T) \times (0,\infty)}.
  % \end{equation}  .
For a given control $u \in \cal U$, the expected total  cost is
\begin{equation}\label{eq:cost-sw}
J(s,x,\al,u\cd)= \ex_{s,x,\al}\left[ \int_s^\tau
l(t,X(t),\al(t),u(t))dt +  g(\tau, X(\tau),\al(\tau))\right],
\end{equation}
where $l: [0,T]\times \rr\times \M \times { U} \mapsto
\rr $ represents the
  running cost,
 $g: [0,T] \times  [0,\infty) \times \M \mapsto \rr $ is the terminal cost,
 and $\ex_{s,x,\al}$ denotes the expectation with respect to the probability law such that the regime-switching diffusion $X(t)$ in \eqref{eq:risk-sw} starts with initial condition  specified in \eqref{eq:initial}.
 As we mentioned in Section \ref{sect-introduction}, we use the word cost in the
 general sense throughout the paper.
 Hence we allow the functions $l$ and $g$ to be negative.

 The goal is to find an optimal control $u^* \in \cal U$ that minimizes the
total   cost
\begin{equation}\label{eq:value-sw}
V(s,x,\al)=J(s, x,\al,u^*)= \inf_{u\in{\cal U}} J(s,x,\al,u), \ \ \forall
(s,x,\al) \in [0,T) \times (0,\infty)\times \M.
\end{equation}
Note that the terminal and boundary conditions are
\beq\label{eq-value-terminal-condition}
V(T,x,\al)= g(T,x,\al), \ \  \forall  (x,\al) \in (0, \infty)\times \M,
\eeq
and \beq\label{eq-value-boundary-condition}
V(s,0,\al) = g(s,0,\al),\ \  \forall  (s,\al) \in [0,T]\times \M.
\eeq

Throughout the paper, we assume
\begin{assumption}\label{assump-ito-condition}  For each $\al \in \M$,
the functions $b(\cdot, \cdot, \al, \cdot)$, $\sigma(\cdot, \cdot, \al, \cdot)$, $l(\cdot, \cdot, \al, \cdot)$, and $g(\cdot, \al)$ are uniformly continuous. Moreover, there exist positive constants $\kappa_0 $ and $p$ such that  for any $t\in [0,T]$, $x,y \in \rr$, $\al \in \M$, and $u \in U$, we have
 \begin{equation}\label{ito-condition}\barray \ad \abs{\vphi(t,x,\al,u)-\vphi(t,y,\al,u)}   \le \kappa_0 \abs{x-y}, \text{ for } \vphi=b,\sigma, l, \text{ and } g, \\ \ad \abs{b(t,x,\al,u)} + \abs{\sigma(t,x,\al,u)} \le \kappa_0(1+ \abs{x}), \\ \ad \abs{l(t,x,\al,u)} + \abs{g(t,x,\al)} \le \kappa_0 (1+ \abs{x}^p).
  \earray \end{equation}
\end{assumption}
It is well known (\cite{MaoY}) that under \assumpref{assump-ito-condition}, for any $u\in\cal U$ and any $(s,x,\al) \in [0,T)\times (0,\infty) \times \M$, there exists a unique solution $X$ to \eqref{eq:risk-sw} with initial condition \eqref{eq:initial} under the control $u$. Moreover, $J$ in \eqref{eq:cost-sw}
is well-defined.
In the sequel, we denote such a solution by $X^{s,x,\al}$ or $X^{s,x,\al;u}$ if the emphasis on the initial conditions and the control is needed.
 Similarly, $\al^{s,\al}$ denotes the Markov chain   with initial condition $\al(s)=\al$.

%\begin{assumption}\label{assump-lg}
% For each $\al \in \M$,
%the functions $l(\cdot, \cdot, \al, \cdot)$ and $g(\cdot, \al)$
%are uniformly continuous and satisfy
%\begin{align}\label{eq:lg-poly}
%\abs{l(t,x,\al,u)} + \abs{g(t,x,\al)} \le K (1+ \abs{x}^p), \\
%\label{eq:lg-lip} \abs{l(t,x,\al,u)- l(t,y,\al,u)} + \abs{g(t,x,\al) -g(t,y,\al)} \le K \abs{x-y},
%\end{align} for any $  t\in [0,T], x,y \in \rr, \al \in \M,$ and $ u\in U,$
% where $K$ and $p$ are positive constants.
%\end{assumption}

For convenience of later presentations, we introduce the operator $\op^u_t$.
  %associated with \eqref{eq:risk-sw}.
For any $h(t,\cdot, \al) \in C^2, t \in [0,T], \al \in
\M$ and $u \in U$, we define
\begin{equation}\label{eq-operator-defn}
\op^u_t h(t,x,\al) = b(t,x,\al,u) \frac{\partial }{\partial x}h(t,x,\al) +
\frac{1}{2} \sigma^2(t,x,\al,u) \frac{\partial^2 }{\partial x^2}h(t,x,\al) +
\sum_{j=1}^m  q_{\al j}h(t,x,j).
\end{equation}

The
following verification theorem can be established using the standard argument as in \cite{FlemingS} and \cite{YongZ}, together with the generalized It\^o formula (\cite{MaoY}). We shall omit the  proof  for brevity.
\begin{thm}\label{thm-verification}
Suppose there exists a  function $\varphi: [s,T]\times \rr_+ \times \M \mapsto
\rr_+$ such that
\begin{itemize}\item[{\em (i)}] $\varphi(\cdot, \cdot, \al) \in C^{1,2} $ for each $\al \in \M$,
\item[{\em (ii)}] $\varphi$  satisfies the polynomial growth condition, that
is, for some positive constants $p$ and  $K$, we have
    \begin{equation}\label{eq:poly-growth}
    \abs{\varphi(t,x,\al)} \le K(1+ |x|^p), \text{ for any } t\in [s,T] \text{
    and } \al \in \M, \end{equation}
\item[{\em (iii)}] $\varphi$  satisfies the Hamilton-Jacobi-Bellman {\em
(HJB)} equation
\begin{equation}\label{eq:HJB} \begin{cases} & \hspace*{-10pt}
\dfrac{\partial }{\partial t}\varphi (t,x,\al) + \disp\min_{u \in U} [\op^u_t
\varphi (t,x,\al) + l(t,x,\al, u)]=0, \\ & \hfill t\in (s,T), x >0, \al \in \M,
\\
&
 \hspace*{-10pt}\varphi(s,0,\al) =g(s,0,\al), \   \varphi(T,x,\al)= g(T,x,\al), \
 s\in [0,T), x> 0, \al \in \M. \end{cases}
\end{equation}
\end{itemize}
Then $\vphi(s,x,\al) \le J(s,x,\al, u\cd)$ for any initial condition $(s,x,\al)
\in [0,T) \times (0,\infty) \times \M$ and any admissible feedback control
$u\cd$.

Moreover, if $u^*\cd$ is an admissible feedback control such that
\begin{equation}\label{eq:optimal-HJB}\begin{aligned}
& \frac{\partial}{\partial t} \vphi(t,x,\al) +
\op^{u^*}_t  \varphi(t,x,\al) + l(t,x,\al, u^*(t,x,\al))   \\ & \ \   = \frac{\partial}{\partial t} \vphi(t,x,\al) + \min_{u \in U}
[\op^u_t  \varphi (t,x,\al) + l(t,x,\al,u)]
\\ & \ \ =0,
\ \ \qquad   \forall (t,x,\al) \in (s,T) \times (0,\infty) \times \M,\end{aligned}
\end{equation} Then $\varphi(s,x,\al) =V(s,x,\al)$ for all $(s,x,\al) \in [0,T)
\times (0,\infty) \times \M$ and $u^*\cd$ is an optimal control.
\end{thm}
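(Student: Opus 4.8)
The plan is to apply the generalized It\^o formula for regime-switching diffusions to the process $t\mapsto \vphi(t,X(t),\al(t))$ and then to exploit the inequality built into the HJB equation \eqref{eq:HJB}. First I would fix an admissible control $u\cd$ (the argument does not require it to be of feedback form), write $X=X^{s,x,\al;u}$ and $\al=\al^{s,\al}$, and apply the generalized It\^o formula (\cite{MaoY}) on the interval $[s,\tau]$. Since $\vphi(\cdot,\cdot,\al)\in C^{1,2}$ for each $\al\in\M$, this produces
\[ \vphi(\tau,X(\tau),\al(\tau)) = \vphi(s,x,\al) + \int_s^\tau \Bigl(\tfrac{\partial}{\partial t}\vphi + \op^u_t\vphi\Bigr)(t,X(t),\al(t))\,dt + M_1(\tau) + M_2(\tau), \]
where $M_1$ is the stochastic integral against $w\cd$ and $M_2$ is the compensated martingale associated with the jumps of the chain $\al\cd$. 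Observe that the operator $\op^u_t$ defined in \eqref{eq-operator-defn} already carries the generator term $\sum_{j} q_{\al j}\vphi(t,x,j)$, which is precisely what makes the drift-plus-generator expression the correct integrand in this hybrid setting.

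The second step is to remove the martingale terms by taking expectations. As $M_1$ and $M_2$ are a priori only local martingales, I would introduce the localizing sequence $\tau_n = \tau\wedge\inf\set{t\ge s:\abs{X(t)}\ge n}$, evaluate $\ex_{s,x,\al}$ at time $\tau_n$ so that the stopped martingale increments vanish, and then let $n\to\infty$. The passage to the limit is justified by the polynomial growth bound \eqref{eq:poly-growth} on $\vphi$ together with the standard moment estimate $\ex_{s,x,\al}\bigl[\sup_{s\le t\le T}\abs{X(t)}^p\bigr]<\infty$, which holds under \assumpref{assump-ito-condition}; dominated convergence then yields
\[ \ex_{s,x,\al}\bigl[\vphi(\tau,X(\tau),\al(\tau))\bigr] = \vphi(s,x,\al) + \ex_{s,x,\al}\Bigl[\int_s^\tau \bigl(\tfrac{\partial}{\partial t}\vphi + \op^u_t\vphi\bigr)(t,X(t),\al(t))\,dt\Bigr]. \]

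The third step invokes \eqref{eq:HJB}: for the chosen control one has $\tfrac{\partial}{\partial t}\vphi + \op^u_t\vphi \ge \tfrac{\partial}{\partial t}\vphi + \min_{v\in U}[\op^v_t\vphi + l] - l = -l(t,X(t),\al(t),u(t))$, so the integrand above is bounded below by $-l$. To identify the left-hand side I would split according to the two ways $\tau=T\wedge\tilde\tau$ is attained: on $\set{\tau=T}$ the terminal condition gives $\vphi(\tau,X(\tau),\al(\tau))=g(T,X(T),\al(T))$, while on $\set{\tau=\tilde\tau<T}$ one has $X(\tau)=0$ and the boundary condition gives $\vphi(\tau,0,\al(\tau))=g(\tau,0,\al(\tau))$; in both cases $\vphi(\tau,X(\tau),\al(\tau))=g(\tau,X(\tau),\al(\tau))$. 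Combining these facts gives $\vphi(s,x,\al)\le \ex_{s,x,\al}[\int_s^\tau l\,dt + g(\tau,X(\tau),\al(\tau))]=J(s,x,\al,u\cd)$, valid for every $u\cd\in\cal U$; this is the first assertion (the feedback case being a special case) and already yields $\vphi(s,x,\al)\le V(s,x,\al)$. For the second assertion, when the admissible feedback control $u^*\cd$ realizes the minimum in \eqref{eq:optimal-HJB} each inequality becomes an equality, so $\vphi(s,x,\al)=J(s,x,\al,u^*\cd)$; since $u^*\cd$ is admissible this forces $V(s,x,\al)\le\vphi(s,x,\al)$, and with the reverse bound we obtain $\vphi(s,x,\al)=V(s,x,\al)$ and the optimality of $u^*\cd$.

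The step I expect to demand the most care is the localization/integrability argument rather than any conceptual point: one must check that both the Brownian integral $M_1$ and the jump martingale $M_2$ are genuinely annihilated in the limit, which requires combining the polynomial growth of $\vphi$ with $L^p$ moment bounds on $X$ up to the random time $\tau$, and that the running-cost integral $\int_s^{\tau_n} l\,dt$ is uniformly integrable so that the limit may be exchanged with the expectation. The regime-switching feature itself enters only through the extra generator term in $\op^u_t$ and the associated martingale $M_2$; since the generalized It\^o formula of \cite{MaoY} already accounts for these, it introduces no new conceptual difficulty here, although it is the reason the state must be enlarged to carry the discrete component $\al\cd$ throughout.
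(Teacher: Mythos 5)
Your proposal is correct and follows essentially the same route as the paper: the paper omits the proof of \thmref{thm-verification}, stating that it follows from ``the standard argument as in \cite{FlemingS} and \cite{YongZ}, together with the generalized It\^o formula (\cite{MaoY})'', and that standard argument is exactly what you carry out --- the It\^o expansion of $\vphi(t,X(t),\al(t))$ including the switching martingale $M_2$, localization justified by the polynomial growth bound \eqref{eq:poly-growth} and $L^p$ moment estimates on $X$, the pointwise HJB inequality $\frac{\partial}{\partial t}\vphi+\op^u_t\vphi\ge -l$, the identification $\vphi(\tau,X(\tau),\al(\tau))=g(\tau,X(\tau),\al(\tau))$ via the terminal and boundary conditions, and equality under $u^*$ from \eqref{eq:optimal-HJB}. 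Your attention to the uniform integrability of the running cost and the annihilation of both martingale terms in the limit is precisely the care the omitted standard proof requires, so there is nothing to correct.
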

%\begin{proof}
%\end{proof}

\section{Continuity}\label{sect-continuity}
\thmref{thm-verification} provides conditions under which a sufficiently smooth function coincides with the value function.
 In particular, it indicates that if $\vphi$ solves the HJB equation \eqref{eq:HJB}, then it provides a lower bound for  the value function $V$. In addition, if
 we can find a feedback control $u^*\cd$ satisfying \eqref{eq:optimal-HJB},
 then $\vphi=V$ and $u^*\cd$ is an optimal control.
 It is natural to ask   whether the converse is true: ``Does the value function $V$ defined  in \eqref{eq:value-sw} always satisfy \eqref{eq:HJB}?''
 In general, the answer is no, since the value function $V$ is not necessarily smooth enough (with respect to the variables $s$ and $x$).
 More specifically, in our setup, both the stopping time $\tau$ and the control $u$ may depend on the initial value $X(s)=x$.
 Consequently, the value function may not be even continuous.
  To illustrate, we consider the following uncontrolled deterministic system, where the value function is discontinuous.
 The example is inspired by the tangency problem presented in \cite[pp. 277--279]{Kushner-D}.

\begin{exm}\label{exm-discontinuous-value-function}
Consider \beq\label{exm1-state} \begin{cases}  dX(t)=2(t-1)dt, \quad t\in [s,2] \\  X(s)=x>0, \end{cases}\eeq where $s\in[0,2]$.
The solution of \eqref{exm1-state} is
\bed X(t) =X^{s,x}(t)= (t-1)^2 + x -(s-1)^2, \ \ t\in[s,2).\eed
Let $\tau=\inf\set{t > s: X(t)=0}\wedge 2$ and   $V(s,x)= \tau$.
Note that \assumpref{assump-ito-condition} is satisfied for this example.
Consider the case when $s\in [0,1]$. Then it is obvious that $X(t)$ first decreases to its minimum $x-(s-1)^2$ then increases to $\infty $
as $t\to \infty$. As a result, we have
\bed \tau \begin{cases} = 2, & \text{ if }x-(s-1)^2>0, \\ \le 1, & \text{ if }x-(s-1)^2\le 0.\end{cases}\eed
Hence it follows that the value function $V(s,x)= \tau$ is not continuous on the parabola $\set{(s,x)\in [0,1]\times (0,\infty): x-(s-1)^2=0}.$
See the demonstration in Figure \ref{fig1}.
\begin{center}
\begin{figure}[htp]
\psset{unit=2.8cm}
\begin{pspicture}(-1.9,-.15)(3,1.5)
\psline[linewidth=.6pt,arrows=->] (-.35,0)(2.5,0)
\psline[linewidth=.6pt,arrows=->] (0,-.25)(0,1.45)
\psline[linewidth=0.5pt,linestyle=dotted](2,1.3)(2,0)

\psPlot[linewidth=1.5pt,linecolor=blue]{0.25}{1}{(x-1)*(x-1)}
\uput[d](0.20,.3){{$X$}}
\psline[linewidth=.3pt,arrows=->] (0.20,.3)(0.25,.5)

\psPlot[linewidth=1.5pt,linecolor=red,linestyle=dotted]{0.25}{2}{(x-1)*(x-1)+0.05}
\uput[d](0.45,.95){{$\tilde X$}}
\psline[linewidth=.3pt,arrows=->] (0.4,.73)(0.3,.605)

\uput[d](1,0){\large{$\tau =1$}}
\uput[d](2,0){\large{$\tilde\tau=2$}}
%\uput[u](0,1.5){\large{$X(t)$}}
\uput[r](2.5,0){\large{$t$}}
\end{pspicture}
\caption{Discontinuous Value Function}
\end{figure}
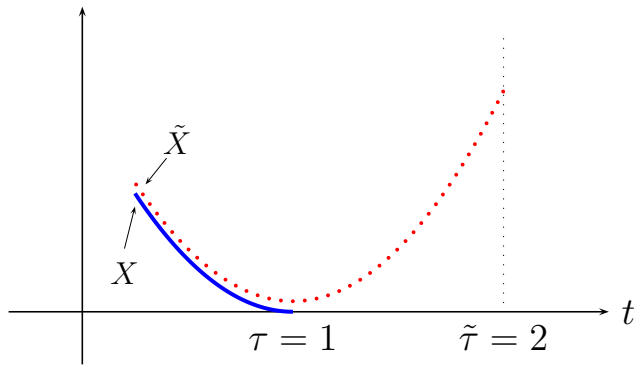\label{fig1}
\end{center}
\end{exm}

Example \ref{exm-discontinuous-value-function} naturally motivates us to the following question: ``Under what conditions is
 the value function continuous?''
To answer this question,  we follow the treatment in \cite[Chapter V]{FlemingS}.
 We first consider an auxiliary control problem, whose  value function $V^\e$ is continuous. Then we propose conditions
  under which $V^\e$ converges uniformly to the original value function $V$ as $\e\downarrow 0$, from which the  continuity of $V$ is established.
 As we shall see in \thmref{thm-continuity}, \propref{prop-about-A3}, and \exmref{exm2}, our condition is more general than that in \cite[Section V.2]{FlemingS}.

Let $\psi: \rr \times \M \mapsto \rr  $ be a  function such that
\beq\label{psi-Lip}  \abs{\psi^+(x,\al)-\psi^+(y,\al)} \le L \abs{x-y}, \text{ for
any  } x,y \in \rr, \al \in \M,\eeq where   $L> 0$  is a constant.
 % Next for any $(s,x,\al)\in[0,T)\times [0,\infty)\times \M$
 % and $u\in \cal U$, we denote $X=X^{s,x,\al;u}$ and $\al=\al^{s,\al}$.
For any $\e>0$, we define
\beq\label{eq-gamma-defn}  \Gamma^\e (t) := \exp\set{- \frac{1}{\e} \int_s^t
\psi^+(X(r),\al(r)) dr}.  \eeq
Note that $\Gamma$ depends on the processes $X$, $\al$, and the underlying control $u$ as well. But for notational
simplicity, we have omitted those dependence.
Consider the auxiliary control problem
\begin{align}\label{eq-cost-aux}
J^\e(s,x,\al, u\cd) & = \ex_{s,x,\al} \left[ \int_s^T  \Gamma^\e(t)
l(t,X(t),\al(t),u(t))dt +  \Gamma^\e(T) g(T,X(T),\al(T))\right],
\\ \label{eq-V-aux}
 V^\e(s,x,\al)& = \inf_{u\cd  \in \cal U} J^\e (s,x,\al, u\cd).
\end{align}

\begin{lem}\label{lem-gamma1-2} Assume Assumption {\em A\ref{assump-ito-condition}} and \eqref{psi-Lip}.
 For any $u  \in \cal U$, denote  $X_i(t) = X^{s,x_i,\al; u}(t)$ and   %$X_2(t) = X^{x_2,\al}(t)$,
 $$\Gamma^\e_i(t) = \exp\set{-\frac{1}{\e}\int_s^t \psi^+(X_i(r),\al(r))dr},\ \ t \in [s,T],$$ where
 $s\in [0,T)$,
$x_i  > 0 $,  $\al \in \M$, and $i=1,2$.
%\footnote{Note that $X_1$ and $X_2$ are under the same control $u\in \cal U$.}
Then we have
  \beq\label{eq-L2-estimate-x12}
   \ex  \sup_{t\in[0,T]} \abs{X_1(t)-X_2(t)}^2 \le K\abs{x_1-x_2}^2,\eeq
and \beq\label{eq-Gamma-cont}
\abs{\Gamma^\e_1 (t)- \Gamma^\e_2 (t)} \le  \frac{L}{\e} (t-s) \sup_{r
\in[s,t]}
\abs{X_1(r)-X_2(r)}.
\eeq
\end{lem}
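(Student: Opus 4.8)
The plan is to establish the two estimates separately, with the $L^2$-bound \eqref{eq-L2-estimate-x12} doing the real work and the discount-factor bound \eqref{eq-Gamma-cont} following almost mechanically from the Lipschitz continuity of the exponential together with \eqref{psi-Lip}. Throughout I would exploit the fact that $X_1$ and $X_2$ are driven by the \emph{same} Brownian motion $w\cd$, the \emph{same} Markov chain path $\al\cd$, and the \emph{same} control $u\cd$, differing only in their initial data. This is precisely what makes the regime-switching harmless here: the chain enters $b$ and $\sigma$ only as a fixed parameter along a common realization, so the Lipschitz bounds in \eqref{ito-condition}, which are uniform in $\al$ and $u$, apply verbatim to each coefficient difference.

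For \eqref{eq-L2-estimate-x12}, I would write both processes in integral form and subtract. Setting $\Delta(t):=X_1(t)-X_2(t)$, one has
\[
\begin{aligned}
\Delta(t) &= (x_1-x_2) + \int_s^t \bigl[b(r,X_1(r),\al(r),u(r))-b(r,X_2(r),\al(r),u(r))\bigr]\,dr \\
&\quad + \int_s^t \bigl[\sigma(r,X_1(r),\al(r),u(r))-\sigma(r,X_2(r),\al(r),u(r))\bigr]\,dw(r).
\end{aligned}
\]
Taking the supremum over $[s,t]$, squaring, and using $(a+b+c)^2\le 3(a^2+b^2+c^2)$, I would control the drift term via the Cauchy--Schwarz inequality and the Lipschitz bound, and the stochastic integral via Doob's $L^2$ maximal inequality followed by the It\^o isometry and the Lipschitz bound. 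This produces an inequality of the form
\[
\ex \sup_{r\in[s,t]} \abs{\Delta(r)}^2 \le 3\abs{x_1-x_2}^2 + K\int_s^t \ex \sup_{\rho\in[s,r]} \abs{\Delta(\rho)}^2\,dr,
\]
with $K$ depending only on $T$ and $\kappa_0$. Gronwall's inequality then yields \eqref{eq-L2-estimate-x12}, after noting that enlarging the time interval from $[s,t]$ to $[0,T]$ only increases the supremum.

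For \eqref{eq-Gamma-cont}, the key observation is that $\psi^+\ge 0$, so both exponents $\frac{1}{\e}\int_s^t\psi^+(X_i(r),\al(r))\,dr$ are nonnegative; since $x\mapsto e^{-x}$ has derivative of modulus at most $1$ on $[0,\infty)$, it is $1$-Lipschitz there, whence
\[
\abs{\Gamma^\e_1(t)-\Gamma^\e_2(t)} \le \frac{1}{\e}\abs{\int_s^t \bigl[\psi^+(X_1(r),\al(r))-\psi^+(X_2(r),\al(r))\bigr]\,dr}.
\]
Pulling the absolute value inside the integral, applying \eqref{psi-Lip} to each integrand, and then bounding $\abs{X_1(r)-X_2(r)}$ by its supremum over $[s,t]$ gives $\frac{L}{\e}\int_s^t \sup_{r\in[s,t]}\abs{X_1(r)-X_2(r)}\,dr = \frac{L}{\e}(t-s)\sup_{r\in[s,t]}\abs{X_1(r)-X_2(r)}$, which is exactly \eqref{eq-Gamma-cont}.

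The only genuine obstacle is the $L^2$ estimate, and even there the difficulty is organizational rather than conceptual: one must apply the maximal inequality to the martingale part \emph{before} taking expectations, so that the Gronwall step closes on $\ex\sup_{r\in[s,t]}\abs{\Delta(r)}^2$ rather than on the pointwise second moment. The square-integrability needed to justify these manipulations is supplied by the linear growth condition in \eqref{ito-condition} under Assumption~A\ref{assump-ito-condition}, and the resulting constant $K$ is uniform over all admissible $u\cd$ precisely because the Lipschitz constants are.
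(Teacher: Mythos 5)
Your proof is correct and takes essentially the same route as the paper: the bound \eqref{eq-Gamma-cont} is derived exactly as in the paper's proof, via the elementary inequality $\abs{e^{-a}-e^{-b}}\le\abs{a-b}$ for $a,b\ge 0$ together with \eqref{psi-Lip} and pulling out the supremum. For \eqref{eq-L2-estimate-x12} the paper simply cites \cite[Lemma 2.14]{YZ-10} rather than arguing, and the Doob-maximal-inequality/It\^o-isometry/Gronwall argument you write out --- valid here precisely because $X_1$ and $X_2$ share the same $w\cd$, $\al\cd$, and $u\cd$, with the chain entering the coefficients only as a common parameter since the switching is not state-dependent --- is exactly the standard proof underlying that citation, so you have merely made explicit what the paper delegates.
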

\begin{proof} Note that   virtually the same argument as that of \cite[Lemma 2.14]{YZ-10} yields \eqref{eq-L2-estimate-x12}. Therefore it remains to prove \eqref{eq-Gamma-cont}.
It is easy to see that  $\abs{e^{-a} - e^{-b}} \le \abs{a-b}$ for any $a,b
\ge 0$. Thus it follows from \eqref{psi-Lip} that
\bed \barray
  \abs{\Gamma^\e_1 (t)- \Gamma^\e_2 (t)} \ad =
  \abs{\exp\set{-\frac{1}{\e}\int_s^t \psi^+(X_1(r),\al(r))dr} -
  \exp\set{-\frac{1}{\e}\int_s^t \psi^+(X_2(r),\al(r))dr}} \\[1.5ex]
\ad \le \frac{1}{\e} \abs{\int_s^t  \left[\psi^+(X_1(r),\al(r)) -
\psi^+(X_2(r),\al(r))\right] dr} \\[1.5ex]
\ad \le   \frac{1}{\e}  \int_s^t L  \abs{X_1(r) -X_2(r) }dr \\[1.5ex]
\ad \le  \frac{L}{\e} (t-s) \sup_{r \in[s,t]}
\abs{X_1(r)-X_2(r)}.
\earray \eed This completes the  proof.
\end{proof}

\begin{thm}\label{thm-cont-aux} Under Assumption  {\em A\ref{assump-ito-condition}},
 % and {\em A\ref{assump-lg}}.
  the function  $V^\e (s,x,\al)$ is continuous  with respect to the variables $s$ and $x$ for each $\al \in \M$,.
 \end{thm}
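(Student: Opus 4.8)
The plan is to reduce the continuity of $V^\e$ to an equicontinuity estimate on the family $\set{J^\e(\cdot,\cdot,\al,u):u\in\cal U}$. Since $V^\e(s,x,\al)=\inf_u J^\e(s,x,\al,u)$ and an infimum of functions sharing a common modulus of continuity inherits that modulus, it suffices to bound $\abs{J^\e(s_1,x_1,\al,u)-J^\e(s_2,x_2,\al,u)}$ by a quantity independent of $u$ that vanishes as $(s_1,x_1)\to(s_2,x_2)$. Throughout I would use $0\le\Gamma^\e\le1$ together with the moment estimate $\ex\sup_{t\in[s,T]}\abs{X^{s,x,\al;u}(t)}^{2p}\le K(1+\abs{x}^{2p})$, which follows from \eqref{ito-condition} by standard arguments and also shows $V^\e$ has polynomial growth (needed later). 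I would treat the $x$-variable and the $s$-variable separately.

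For continuity in $x$ (with $s,\al$ fixed) I would couple $X_1=X^{s,x_1,\al;u}$ and $X_2=X^{s,x_2,\al;u}$ on the same Brownian motion, Markov chain, and control, and split
\[
\Gamma^\e_1 l_1-\Gamma^\e_2 l_2=\Gamma^\e_1(l_1-l_2)+(\Gamma^\e_1-\Gamma^\e_2)l_2,
\]
where $l_i=l(t,X_i(t),\al(t),u(t))$, and similarly for the terminal term with $g$. The first piece is controlled by the Lipschitz bound $\abs{l_1-l_2}\le\kappa_0\abs{X_1-X_2}$ from \eqref{ito-condition} together with $\Gamma^\e_1\le1$; the second by \eqref{eq-Gamma-cont} of \lemref{lem-gamma1-2} and the growth bound $\abs{l_2}\le\kappa_0(1+\abs{X_2}^p)$. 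Taking expectations, applying the Cauchy--Schwarz inequality, and invoking \eqref{eq-L2-estimate-x12} and the moment estimate, I expect to obtain
\[
\abs{J^\e(s,x_1,\al,u)-J^\e(s,x_2,\al,u)}\le C(\e,T)\,(1+\abs{x_1}^p+\abs{x_2}^p)\,\abs{x_1-x_2}
\]
uniformly in $u$, so that $x\mapsto V^\e(s,x,\al)$ is locally Lipschitz; the constant $C(\e,T)$ may blow up as $\e\downarrow0$, but this is harmless since $\e>0$ is fixed here.

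The main obstacle is continuity in $s$, because the horizon $[s,T]$, the integration window, and the time-dependent coefficients all move with $s$, so trajectories started at $s_1$ and $s_2$ cannot be coupled by a common control in any naive way. My plan is to exploit the multiplicative structure of the discount factor \eqref{eq-gamma-defn}: for $s_1<s_2$ and $t\ge s_2$ one has $\Gamma^\e(t)=\Gamma^\e(s_2)\exp\set{-\frac1\e\int_{s_2}^t\psi^+(X(r),\al(r))\,dr}$, which, together with the Markov property of $(X,\al)$, yields the dynamic programming relation
\[
V^\e(s_1,x,\al)=\inf_{u\in\cal U}\ex_{s_1,x,\al}\Big[\int_{s_1}^{s_2}\Gamma^\e(t)\,l(t,X(t),\al(t),u(t))\,dt+\Gamma^\e(s_2)\,V^\e(s_2,X(s_2),\al(s_2))\Big].
\]
Establishing this relation rigorously (the measurable-selection/approximation step) is the technical heart of the argument. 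Granting it, I would estimate $\abs{V^\e(s_1,x,\al)-V^\e(s_2,x,\al)}$ using: the short-time bound $\ex\abs{X(s_2)-x}^2\le K(1+\abs{x}^2)(s_2-s_1)$; the inequality $\abs{1-\Gamma^\e(s_2)}\le\frac1\e\int_{s_1}^{s_2}\psi^+(X(r),\al(r))\,dr$, which is $O((s_2-s_1)/\e)$ in $L^1$ since $\psi^+$ has at most linear growth by \eqref{psi-Lip}; the bound $\pr\set{\al(\cdot)\text{ jumps in }[s_1,s_2]}\le K(s_2-s_1)$ from \eqref{Q-gen}; and the $x$-Lipschitz continuity just established to absorb $V^\e(s_2,X(s_2),\al(s_2))-V^\e(s_2,x,\al)$ on the event that $\al$ does not jump, while the polynomial growth of $V^\e$ bounds the contribution of the rare jump event. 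Combining these with the analogous reverse inequality would give a modulus of continuity in $s$ depending only on $\e$, $T$, and $\abs{x}$, completing the proof.
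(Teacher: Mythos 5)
Your proposal is correct and follows essentially the same route as the paper's proof: first a Lipschitz-in-$x$ estimate for $J^\e$, uniform in $u$, obtained by coupling $X_1,X_2$ under a common control and combining the decomposition of $\Gamma^\e_1 l_1-\Gamma^\e_2 l_2$ with \lemref{lem-gamma1-2}, Cauchy--Schwarz, and the moment bound \eqref{eq-Lp-estimate} (then passing the modulus through the infimum, exactly as in the paper's Steps 1--2); then continuity in $s$ via the dynamic programming principle for $V^\e$ with a near-optimal control, the short-time estimate $\ex\abs{X(s_2)-x}\le K\abs{s_2-s_1}^{1/2}$, the chain-jump bound from \eqref{Q-gen}, and the just-proved $x$-continuity, matching the paper's Step 3. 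The one small divergence is in your favor: your DPP correctly carries the discount factor $\Gamma^\e(s_2)$ in front of $V^\e(s_2,X(s_2),\al(s_2))$, whereas the paper's displayed relation omits it, and your $L^1$ bound $\abs{1-\Gamma^\e(s_2)}\le K(s_2-s_1)/\e$ (from the Lipschitz, hence linear-growth, property \eqref{psi-Lip} of $\psi^+$) cleanly absorbs the resulting extra term at the same rate as the paper's estimates.
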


\begin{proof} We divide the proof into several steps.

Step 1.
For $\phi= l,g$ and $t \in [0,T]$, with notations as in \lemref{lem-gamma1-2}, it follows from the Cauchy-Schwartz inequality and  \assumpref{assump-ito-condition} that
\bea
\ad \ex  \abs{\Gamma^\e_1(t) \phi(t, X_1(t), \al(t), u(t) )-
\Gamma^\e_2(t) \phi(s, X_2(s), \al(s), u(t) )  } \\[1ex]
\aad  \le \ex \abs{(\Gamma_1^\e (t) - \Gamma_2^\e(t)) \phi(t, X_1(t), \al(t),
u(t) )} \\[1ex]
 \aad \quad + \ex \abs{\Gamma^\e_2(t)[ \phi(t, X_1(t), \al(t), u(t)
)- \phi(t, X_2(t), \al(t), u(t) )]} \\[1ex]
\aad \le  \ex^{1/2} \abs{\Gamma_1^\e (t) - \Gamma_2^\e(t)}^2 \ex^{1/2}\abs{
\phi(t, X_1(t), \al(t), u(t) )}^2 \\[1ex]
\aad \quad +  \ex^{1/2} \abs{\Gamma^\e_2(t)}^2 \ex^{1/2} \abs{\phi(t, X_1(t),
\al(t), u(t) )- \phi(t, X_2(t), \al(t), u(t) )}^2  \\[1ex]
\aad \le K \ex^{1/2} \abs{\Gamma_1^\e (t) - \Gamma_2^\e(t)}^2 \ex^{1/2} (1+ \abs{X_1(t)}^{2p}) + K \ex^{1/2}\abs{X_1(t)-X_2(t)}^2.
\eea
Note that by virtue of \cite[Theorem 3.24]{MaoY}, we have \beq\label{eq-Lp-estimate}\ex  \sup_{t\in[0,T]} \abs{X_1(t)}^{2p}  \le K= K(x_1, T, p).\eeq
This, together with \lemref{lem-gamma1-2}, leads to
\beq\label{eq:ineq-gamma-phi}
\barray
\ad  \ex  \abs{\Gamma^\e_1(t) \phi(t, X_1(t), \al(t), u(t) )-
\Gamma^\e_2(t) \phi(s, X_2(s), \al(s), u(t) )  } \\
\aad \le K \frac{L}{\e} (t-s) \ex^{1/2} \( \sup_{r\in
[s,t]}\abs{X_1(r)-X_2(r)}\)^2
+ K \ex^{1/2} \abs{X_1(t)-X_2(t)}^2
\\ \aad \le K\abs{x_1 -x_2},\earray
\eeq
where $K=K(\e,x_1, T, L, p)$ does not depend on %$x_1, x_2,$ or
$x_2 $, $t$, or $u$.

Step 2. Now it follows from \eqref{eq:ineq-gamma-phi} that
\beq\label{eq-V-x1-x2}\barray
\ad \abs{V^\e(s,x_1, \al) -V^\e(s,x_2, \al)  }\\  \aad \le \sup_{u \in \cal U}
\ex \bigg[\int_s^T  \abs{ \Gamma^\e_1 (t) l(t,X_1(t),\al(t), u(t)) dt -
\Gamma^\e_2 (t) l(t,X_2(t),\al(t), u(t))} dt
\\ \aad \quad \qquad \quad \hfill +   \left|\Gamma^\e_1 (t) g(T, X_1(T)) -
\Gamma^\e_2 (t) g(T, X_2(T)) \right|\bigg]
 \\
\aad \le K\abs{x_1 -x_2}.\earray
\eeq This shows that $V^\e(s, \cdot, \al)$ is continuous for any $s \in [0,T]$ and $\al\in\M$.

Step 3. Next we prove that $V^\e$ is continuous with respect to $s$
as well. To this purpose, we consider $s_1 < s_2\le T$.
 By virtue of \cite[Section IV.7]{FlemingS}, $V^\e$ satisfies the dynamic programming principle. Therefore for any $\delta>0$, we can choose a $u_1 \in \cal U$ such that
 \bed  \begin{aligned}V^\e(s_1,x,\al) &  \le \ex \left[\int_{s_1}^{s_2}   \Gamma^\e (t) l(t, X(t),\al(t), u_1(t) ) dt +   V^\e (s_2, X(s_2), \al(s_2))\right] \\ & <  V^\e(s_1,x,\al)  + \delta/3, \end{aligned} \eed
 where $\Gamma^\e (t) = \exp\{-\frac{1}{\e}\int_{s_1}^t \psi^+(X(r),\al(r))dr\}$ and $X=X^{s_1,x,\al;u_1}$.
Then we have from \assumpref{assump-ito-condition}  that
\beq\label{Ve-I-II-III} \begin{aligned} & \abs{V^\e(s_1,x,\al) - V^\e(s_2,x,\al)} -  \delta /3 \\& \ \  \le \abs{\ex\left[\int_{s_1}^{s_2}  \Gamma^\e (t) l(t, X(t),\al(t), u_1(t) ) dt +   V^\e (s_2, X(s_2), \al(s_2))\right]- V^\e(s_2,x,\al)  } \\
 & \ \ \le \int_{s_1}^{s_2} K (1+ \ex \abs{X(t)}^p)dt + \ex \abs{V^\e(s_2, X(s_2), \al) - V^\e(s_2, x, \al) }\\
  & \ \ \ \ \  +
  \ex \abs{V^\e (s_2, X(s_2), \al(s_2)) - V^\e (s_2, X(s_2), \al)}
  \\ & \ \  := I + II + III.
  %\\
  %& \ \ \le K(s_2-s_1) + C \ex\abs{X(s_2)-x} + K(s_2-s_1)  + K(Q) (s_2-s_1).
   \end{aligned} \eeq
   Using \eqref{eq-Lp-estimate}, we obtain
   \beq\label{Ve-I} I =\int_{s_1}^{s_2} K (1+ \ex \abs{X(t)}^p)dt \le K(s_2- s_1). \eeq
  For the term  last term, we first notice that   the definition of $V^\e$ in \eqref{eq-V-aux},
  \assumpref{assump-ito-condition}, and \eqref{eq-Lp-estimate} imply that that $V^\e(s_2, X(s_2), j) \le K$ for every $j\in \M$,
  where $K= K(x,T,p)$ is a constant. Then it follows that
\beq\label{Ve-III}\begin{aligned} III  & =\ex \abs{ V^\e (s_2, X(s_2), \al(s_2)) - V^\e (s_2, X(s_2), \al)} \\ & =  \ex \left[\abs{  V^\e (s_2, X(s_2), \al(s_2)) - V^\e (s_2, X(s_2), \al) }I_{\set{\al(s_2)\not= \al}}\right] \\
& \le K \pr\set{\al(s_2)\not= \al} \\
& \le K (s_2-s_1),
\end{aligned}\eeq where in the last inequality, we used \eqref{Q-gen}.
Further, since $$X(s_2) = x + \int_{s_1}^{s_2} b(t, X(t),\al(t),u_1(t)) dt + \int_{s_1}^{s_2} \sigma(t, X(t),\al(t),u_1(t)) dw(t), $$
using \assumpref{assump-ito-condition} and \eqref{eq-Lp-estimate}, we can readily verify that
\bed
\ex\abs{X(s_2)-x} \le K \abs{s_2-s_1}^{1/2}.\eed
Hence by virtue of \eqref{eq-V-x1-x2}, we have
\beq\label{Ve-II} II= \ex \abs{V^\e(s_2,X(s_2),\al) - V^\e(s_2,x,\al)} \le K \ex \abs{X(s_2)- x} \le K\abs{s_2 -s_1}^{1/2}.\eeq Combing the above estimates \eqref{Ve-I}--\eqref{Ve-II} into \eqref{Ve-I-II-III}, we arrive at
\bed \abs{V^\e(s_1,x,\al) - V^\e(s_2,x,\al)} -  \delta /3 \le K \abs{s_2-s_1}^{1/2} + K(s_2-s_1).\eed Since $\delta>0$ is arbitrary, the continuity of $V^\e$ with respect to $s$ is established, as desired.
 \end{proof}

\begin{assumption}\label{assump-continuity-condition}
Suppose there exists a function $\psi:\rr\times \M \mapsto \rr$ satisfying \eqref{psi-Lip} and that
\beq\label{psi<=0-in-[0,infty)}
\psi(y,j) \le 0, \ \ \forall (y,j)\in [0,\infty)\times \M.
\eeq
Moreover, there exists some $u \in U$ such that
\beq\label{continuity-key-condition}  \int_s^t \psi^+(X(r),\al(r)) dr > 0, \text{ a.s. for any } t \in (s,T],\eeq where $X=X^{s,0,\al;u}$ is the controlled process under the constant control $u(t)\equiv u, t \in [s,T]$, $\al=\al^{s,\al}$, $s \in [0,T)$, and  $\al\in \M$.
\end{assumption}

\begin{thm}\label{thm-continuity} In addition to  Assumptions {\em A\ref{assump-ito-condition}} and {\em A\ref{assump-continuity-condition}}, suppose also  that $g(\cdot, \cdot, \al) \in C^{1,2}$ for each $\al \in \M$ and that
\begin{align}\label{continuity-condition-g} & \frac{\partial }{\partial t} g(t,x,\al) + \op^u_t g(t,x,\al) + l(t,x,\al,u) \ge 0, \ \ \forall (t,x,\al, u) \in (0,T) \times (0,\infty) \times \M\times U,
\end{align}
 Then
$V(\cdot,\cdot,\al)$ is continuous for each $\al\in \M$.
\end{thm}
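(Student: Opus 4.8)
The plan is to carry out the penalization argument of \cite[Chapter V]{FlemingS}. By \thmref{thm-cont-aux} each $V^\e(\cdot,\cdot,\al)$ is already continuous, so it suffices to prove that $V^\e\to V$ as $\e\downarrow0$ uniformly on every compact subset of $[0,T)\times(0,\infty)$; a locally uniform limit of continuous functions is continuous, which is the assertion. I would therefore establish the two one-sided bounds $\limsup_{\e\downarrow0}V^\e\le V$ and $\liminf_{\e\downarrow0}V^\e\ge V$, keeping every error term uniform in the initial pair $(s,x)$ over compacts.

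The device that makes the ruin cost and the terminal cost compatible is the hypothesis $g(\cdot,\cdot,\al)\in C^{1,2}$ together with \eqref{continuity-condition-g}. Applying the generalized It\^o formula (\cite{MaoY}) to $g(t,X(t),\al(t))$ on $[s,\tau]$ gives, for every admissible $u\cd$,
\[
J(s,x,\al,u\cd)=g(s,x,\al)+\ex_{s,x,\al}\int_s^\tau\Big[\tfrac{\partial}{\partial t}g+\op^u_t g+l\Big]\big(t,X(t),\al(t),u(t)\big)\,dt ,
\]
where the integrand $\tilde l_u:=\tfrac{\partial}{\partial t}g+\op^u_t g+l$ is nonnegative on $\{X>0\}$, i.e. for all $t<\tau$, by \eqref{continuity-condition-g}. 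In particular $g\le V$, and, modulo the additive smooth term $g$, the original problem is an exit problem with the nonnegative running cost $\tilde l_u$. The same computation applied to $\Gamma^\e(t)g(t,X(t),\al(t))$ rewrites $J^\e$ through $\tilde l_u$ as well (working with any fixed $C^{1,2}$ extension of $g$ to $\{x<0\}$, whose values are immaterial in the limit since $\Gamma^\e\to0$ there); this is the bookkeeping on which the convergence rests.

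For the upper bound I would fix $\dl>0$, pick a $\dl$-optimal $u\cd$ for $V(s,x,\al)$, and feed it into $J^\e$. By \eqref{psi<=0-in-[0,infty)} we have $\psi^+(X(r),\al(r))=0$ while $X(r)>0$, so $\Gamma^\e\equiv1$ on $[s,\tilde\tau)$ and $J^\e$ and $J$ coincide up to the ruin time. On $\{\tilde\tau\le T\}$, \eqref{continuity-key-condition} applied at $\tilde\tau$ via the strong Markov property forces $\int_{\tilde\tau}^t\psi^+\,dr>0$ a.s. for every $t>\tilde\tau$, hence $\Gamma^\e(t)\downarrow0$ there; together with the moment estimate \eqref{eq-Lp-estimate} and dominated convergence, the post-ruin contributions to $J^\e$ vanish, leaving $\limsup_\e J^\e(s,x,\al,u\cd)\le J(s,x,\al,u\cd)\le V(s,x,\al)+\dl$. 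The lower bound is the harder direction: I would take a near-optimal $u^\e$ for $V^\e$ and use the representation above, noting that the pre-ruin part of $J^\e(\cdot,u^\e)$ equals $g(s,x,\al)+\ex\int_s^\tau\tilde l_{u^\e}=J(s,x,\al,u^\e)\ge V$ (the $\Gamma^\e$-weight is $1$ and $\tilde l_{u^\e}\ge0$ there), while the post-ruin part must be shown to be $\ge-o(1)$ uniformly in $u^\e$, using \lemref{lem-gamma1-2} and \eqref{eq-Lp-estimate}. The uniformity in $(s,x)$ is inherited from exactly the control- and data-uniform estimates of Steps 1--3 in the proof of \thmref{thm-cont-aux}.

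The main obstacle is the ruin boundary. Where $x>0$ the penalty is inert ($\psi^+\equiv0$) and $V^\e$ simply runs the unpenalized dynamics, so the whole content of the theorem is that the penalized problem---which never actually stops the diffusion---reproduces the stopped problem in the limit. This is the role I expect \eqref{continuity-condition-g} to play: the It\^o correction produced by letting the process run past $\{x=0\}$ should cancel the terminal contribution, so that $\lim_\e V^\e$ carries the correct boundary data $g(\cdot,0,\cdot)$ rather than an artifact of not stopping, while \eqref{continuity-key-condition} guarantees that this cancellation is not spoiled by the process lingering near the boundary. Controlling the post-ruin term uniformly in the control for the lower bound is where I anticipate the real difficulty. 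A secondary difficulty, absent from the diffusive setting of \cite{FlemingS}, is the martingale generated by the jumps of $\al\cd$ in the generalized It\^o formula, together with the switching estimate $\pr\{\al(s_2)\ne\al\}=O(s_2-s_1)$ from \eqref{Q-gen}; one must check, via \assumpref{assump-ito-condition} and \eqref{eq-Lp-estimate}, that these extra terms are dominated uniformly on compacts, so that pointwise convergence is upgraded to the locally uniform convergence the argument requires.
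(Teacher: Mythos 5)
Your reduction of the general case to a nonnegative running cost---applying It\^o's formula to $g$ and setting $\tilde l_u=\frac{\partial}{\partial t}g+\op^u_t g+l\ge 0$, $\tilde g\equiv 0$, via \eqref{continuity-condition-g}---is exactly the paper's Step 2, and your observation that $\Gamma^\e\equiv 1$ before ruin by \eqref{psi<=0-in-[0,infty)} is also the right starting point. The decisive flaw is in your upper bound. You apply \eqref{continuity-key-condition} ``at $\tilde\tau$ via the strong Markov property'' along a $\dl$-optimal control for $V$. But \assumpref{assump-continuity-condition} asserts \eqref{continuity-key-condition} only for the process run under one fixed \emph{constant} control $u(t)\equiv u$ from the deterministic initial datum $X(s)=0$; it gives no information about the path after ruin under your near-optimal control, and under a general $\set{\F_t}$-adapted control the pair $(X,\al)$ need not even be Markov, so there is no strong Markov property to invoke. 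Concretely, after $\tilde\tau$ the near-optimal control may hold the state in $[0,\infty)$, where $\psi^+\equiv 0$, so $\Gamma^\e\equiv 1$ and the post-ruin contribution never vanishes. (A repair would be concatenation---switch to the constant control of \assumpref{assump-continuity-condition} after $\tilde\tau$, which leaves $J$ unchanged---but you do not do this, and the paper never needs to: it uses that control only \emph{at the boundary}, to show $V^\e(s,0,\al)\to 0$, and then transfers this to interior data through the dynamic programming principle for $V^\e$ stopped at $\tau$, yielding $V^\e(s,x,\al)\le V(s,x,\al)+h(\e)$ with $h(\e)=\sup\set{V^\e(s,0,\al):s\in[0,T],\al\in\M}$.)

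Two further gaps. First, your uniformity mechanism fails: you claim locally uniform convergence is inherited from Steps 1--3 of the proof of \thmref{thm-cont-aux}, but those moduli carry the factor $L/\e$ (the constant $K=K(\e,x_1,T,L,p)$ in \eqref{eq:ineq-gamma-phi} and \eqref{eq-V-x1-x2}), so the family $\set{V^\e}$ is not equicontinuous as $\e\downarrow 0$ and no uniform limit follows that way. The paper's substitute is monotonicity plus Dini: since $\psi^+\ge 0$ and the (reduced) running cost is nonnegative, $J^{\e_1}\le J^{\e_2}$ for $\e_1<\e_2$, hence $\e\mapsto V^\e(s,0,\al)$ is monotone, and Dini's theorem upgrades the pointwise limit $V^\e(s,0,\al)\to 0$ to uniformity in $s$; combined with the DPP bound above and the trivial inequality $V^\e\ge V$ (immediate from $J^\e(s,x,\al,u)=J(s,x,\al,u)+\ex\bigl[I_{\set{\tau<T}}\int_\tau^T\Gamma^\e(t)\,l(t,X(t),\al(t),u(t))\,dt\bigr]\ge J(s,x,\al,u)$ once $l\ge 0$), this gives the uniform sandwich $V\le V^\e\le V+h(\e)$ on all of $[0,T]\times(0,\infty)\times\M$. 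Second, your lower bound is not actually proved: the assertion that the post-ruin part is $\ge -o(1)$ uniformly in $u^\e$ is precisely where the argument would break (after ruin $X$ may be negative, where \eqref{continuity-condition-g} says nothing about the sign of $\tilde l$ and $\Gamma^\e$ need not be small under $u^\e$), and it becomes unnecessary once $V^\e\ge V$ is observed directly. In short, what is missing from your proposal is the boundary-estimate-plus-DPP-plus-Dini mechanism that constitutes the paper's Step 1; the near-optimal-control limit you attempt cannot be closed with the hypotheses as stated.
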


\begin{proof} We divide the proof into two steps. The first step is concerned  with the special case when $l\ge 0$ and $g\equiv 0$ while the second step deals with the general case.

 Step 1.
First assume  $l\ge 0$ and $g\equiv 0$. Fix $(s,\al) \in [0,T) \times \M$.  Let $u  $, $X$, and $\al$ as in \assumpref{assump-continuity-condition}.
 Then \eqref{continuity-key-condition} implies that for any $t> s$
\bed \lim_{\e \downarrow 0} \Gamma^\e(t) = \lim_{\e \downarrow 0} \exp\set{-\frac{1}{\e}\int_s^t \psi^+(X(r),\al(r))dr}=0,\quad \text{a.s.}\eed
and hence by virtue of the dominated convergence theorem and the definition of $J^\e$ in \eqref{eq-cost-aux}, we obtain \bed  \lim_{\e \downarrow 0} J^\e(s,0,\al, u)=0.\eed
Since $J^\e (s,0,\al,u) \ge V^\e(s,0,\al) \ge 0$, it follows that
\bed \lim_{\e \downarrow 0} V^\e(s,0,\al) =0.\eed
Let $0< \e_1 < \e_2$. Then, noting %\eqref{eq-assump-consequence}
  the nonnegativity of the functions $\psi^+$ and  $l$, we can readily verify that
$J^{\e_1}(s,0,\al,u) \le J^{\e_2}(s,0,\al,u)$ and hence $V^{\e_1}(s,0,\al) \le V^{\e_2}(s,0,\al)$.
 We have shown in \thmref{thm-cont-aux} that
  $V^\e$ is continuous. % and monotonically decreasing (in $\e$, verify this),
Thus Dini's theorem implies that
$\lim_{\e \downarrow 0} V^\e(s,0,\al) =0 $ uniformly on $[0,T]  $  for each $\al \in \M$.
Now let
$$h(\e):= \sup\set{V^\e(s,0,\al): s\in [0,T], \al \in \M}.$$
Then we have
$\lim_{\e \to 0} h(\e)=0$ thanks to the uniform convergence of $V^\e$.

For any $(s,x,\al) \in [0,T] \times (0,\infty)\times \M$, thanks to the dynamic programming principle for $V^\e$
 %\eqref{eq:dynamic-programming-principle}
and the definition of $h$, we have\footnote{Note that if $\tau=T$, then $V^\e(T,X(T),\al(T))=V(T,X(T),\al(T))=g(T,X(T),\al(T))=0$ by our assumption on $g$.}
\bea V^\e(s,x,\al) \ad = \inf_{u\in \cal U} \ex \left[\int_s^\tau  l(t, X(t),\al(t),u(t))dt + V^\e(\tau, X(\tau),\al(\tau))\right] \\[2ex]
\ad \le \inf_{u\in \cal U} \ex \int_s^\tau  l(t, X(t),\al(t),u(t))dt + h(\e) \\
\ad = V(s,x,\al) + h(\e).
\eea
Thanks to the definition of $J^\e$ in \eqref{eq-cost-aux} and the assumption that $g\equiv 0$, we have
\bed\begin{aligned} J^\e(s,x,\al,u) & = \ex \int_s^T \Gamma^\e(t) l(t,X(t),\al(t),u)dt\\
   & = \ex\left[\int_s^\tau \Gamma^\e(t) l(t,X(t),\al(t),u(t))dt + I_{\set{\tau< T}} \int^T_\tau \Gamma^\e(t) l(t,X(t),\al(t),u(t))dt \right].\end{aligned} \eed
Note that for all $t\in [s,\tau]$, $X(t)\in [0,\infty)$. Thus it follows from \assumpref{assump-continuity-condition} and the definition of $\Gamma^\e$ in \eqref{eq-gamma-defn} that
$\Gamma^\e(t)=1$ and hence $$J^\e(s,x,\al,u) = J(s,x,\al,u) + \ex \left[I_{\set{\tau < T}}\int^T_\tau \Gamma^\e(t) l(t,X(t),\al(t),u(t))dt\right].$$
Furthermore, since $l \ge 0$, we   have
$$V(s,x,\al) \le V^\e(s,x,\al) \le V(s,x,\al) + h(\e). $$
This implies that $V^\e \to V$ uniformly on $[0,T]\times (0,\infty)\times \M$.
 %\footnote{Will this a problem? Because $(0,\infty)$ is not bounded.}
  Since $V^\e$ is continuous, $V$ is also continuous.

  Step 2. For general $l$ and $g$, let $\tilde l(t,x,\al,u)=  l(t,x,\al,u) + \frac{\partial }{\partial t} g(t,x,\al) + \op^u_t g(t,x,\al)  $
  and $\tilde g \equiv 0$.
  Then $\tilde l \ge 0$ by virtue of \eqref{continuity-condition-g} and hence Step 1 implies that the function
  \bed \wdt V(s,x,\al) := \inf_{u\in \cal U}\wdt J(s,x,\al,u) = \inf_{u \in \cal U}\ex \int_s^\tau \tilde l(t,X(t),\al(t), u(t) )dt \eed
  is continuous.  Apply It\^o's formula to $g$,
  \bed \ex g(\tau, X(\tau),\al(\tau)) -g(s,x,\al) = \ex \int_s^\tau \(\frac{\partial }{\partial t} + \op^u_t\) g(t,X(t),\al(t))dt.\eed
  Then it follows that
  \bed \barray \wdt J(s,x,\al,u) \ad = \ex \int_s^\tau \tilde l(t,X(t),\al(t),u(t))dt \\
  \ad = \ex  \int_s^\tau   l(t,X(t),\al(t),u(t))dt + \ex \int_s^\tau  \(\frac{\partial }{\partial t} + \op^u_t\) g(t,X(t),\al(t))dt \\
  \ad = \ex  \int_s^\tau   l(t,X(t),\al(t),u(t))dt + \ex g(\tau, X(\tau),\al(\tau)) -g(s,x,\al) \\
  \ad =J(s,x,\al,u) -g(s,x,\al).
  \earray\eed
  Therefore we conclude that  $V(s,x,\al) = \wdt V(s,x,\al) + g(s,x,\al) $ is also continuous.
\end{proof}

\begin{rem}
With the continuity of the value function at our hands, we have the dynamic programming principle by virtue of \cite{FlemingS}:
\begin{equation}\label{eq:dynamic-programming-principle}\begin{aligned}
V(s,x,\al) =   \inf_{u\cd \in \cal U} \ex \bigg\{\int_s^{\theta\wedge \tau} %e^{-r t}
l(t,X(t),\al(t),u(t)) dt    +  %e^{-r (\theta\wedge \tau-s )}
V(\theta\wedge \tau, X(\theta\wedge \tau), \al( \theta\wedge \tau))\bigg\},
\end{aligned}
\end{equation} where $\theta$ is a stopping time.
\end{rem}

 \begin{rem}
 Note that \assumpref{assump-continuity-condition}, in particular \eqref{continuity-key-condition}, is the crucial condition in the proof of \thmref{thm-continuity}.
  One may wonder when \assumpref{assump-continuity-condition} is true? Next we present several sufficient conditions.\end{rem}

\begin{prop}\label{prop-about-A3}
Any one of the following conditions implies Assumption {\em A\ref{assump-continuity-condition}}:
\begin{enumerate}
\item[{\em (i)}] There exists a function $\psi:\rr\times \M \mapsto \rr$ satisfying \eqref{psi-Lip}, \eqref{psi<=0-in-[0,infty)}, and $\psi(0,\al)=0$ for each $\al\in\M$,   and that for some $u\in U$, $\set{\psi(X(t),\al(t)), t\in [s,T]}$
 is a strict local submartingale, where $X=X^{s,0,\al;u}$,  $\al=\al^{s,\al}$, $s\in[0,T)$, and $ \al\in \M$.

 \item[{\em (ii)}] There exists a twice continuously differentiable  function $\psi:\rr\times \M \mapsto \rr$ satisfying \eqref{psi-Lip}, \eqref{psi<=0-in-[0,infty)}, and $\psi(0,\al)=0$ for each $\al\in\M$,   and that
     for some $u\in U$,
  \beq\label{psi-strict-submg} \op_t^u \psi(0,\alpha )= b(t,0,\alpha ,u)\psi'(0,\alpha )
   + \frac{1}{2} \sigma^2(t,0,\alpha ,u) \psi''(0,\alpha )
    % + \sum_{j=1}^m q_{\alpha j} \psi(0,j)
   >0,  \eeq
  where $t\in[0,T]$ and $\alpha \in \M$.

\item[{\em (iii)}]  There exists a $u\in U$ such that the boundary point $0$ is  {\em regular}  for the domain $(0,\infty)$ for the process $(X,\al)=(X^{s,0,\al;u},\al^{s,\al})$, where $s\in[0,T), \al\in \M$.
\end{enumerate}
\end{prop}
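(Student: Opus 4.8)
The plan is to verify, for each of the three conditions, the one nontrivial requirement of \assumpref{assump-continuity-condition}, namely \eqref{continuity-key-condition}; the structural conditions \eqref{psi-Lip} and \eqref{psi<=0-in-[0,infty)} are either hypothesized in (i)--(ii) or will hold by construction in (iii). The first thing I would record is a reduction. Writing $Y(r):=\psi(X(r),\al(r))$ with $X=X^{s,0,\al;u}$ and $\al=\al^{s,\al}$, one has $\psi^+(X(r),\al(r))=Y(r)^+$, so that \eqref{continuity-key-condition} says exactly that the right-continuous process $Y$ (recall $X$ is continuous and $\al$ right-continuous) spends strictly positive Lebesgue time in $(0,\infty)$ on every interval $(s,t]$, almost surely. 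By right-continuity, $\set{\int_s^t Y(r)^+dr=0}=\set{Y(r)\le 0 \text{ for all } r\in[s,t]}$ up to a null set, and this is the event I must rule out.

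For part (i), which I would treat as the main lemma, I start from the It\^o/Doob--Meyer decomposition $Y(t)=Y(s)+\M(t)+A(t)$, where $\M$ is a local martingale with $\M(s)=0$, $A$ is the predictable increasing part, and $Y(s)=\psi(0,\al)=0$; the hypothesis that $Y$ is a \emph{strict} local submartingale means $A$ is strictly increasing. Let $\rho:=\inf\set{r>s:Y(r)>0}$, localized by the first exit of $X$ from a small ball about $0$ and by the first jump time of $\al$ (both a.s.\ $>s$, so on the initial interval $Y$ is continuous, $A$ genuinely strictly increasing, and $\M$ a true martingale after stopping). The plan is to prove $\pr(\rho=s)=1$. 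On the one hand, $\set{\rho=s}$ lies in the germ $\sigma$-field $\F_{s+}$ of the strong Markov process $(X,\al)$ started at the fixed point $(0,\al)$, so Blumenthal's $0$--$1$ law gives $\pr(\rho=s)\in\set{0,1}$. On the other hand, if $\pr(\rho>s)=1$ then $Y(r)\le 0$ on $[s,t\wedge\rho]$ for each $t>s$, so optional sampling yields $\ex[Y(t\wedge\rho)]\ge\ex[A(t\wedge\rho)]>0$, whereas $Y(t\wedge\rho)\le 0$ forces $\ex[Y(t\wedge\rho)]\le 0$, a contradiction; hence $\pr(\rho=s)>0$ and therefore $\pr(\rho=s)=1$. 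Finally, right-continuity of $Y$ upgrades $\rho=s$ to $Y>0$ on a subinterval of every $(s,t]$, giving \eqref{continuity-key-condition}. \textbf{The hard part is precisely this last upgrade from an in-expectation estimate to an almost-sure one}: the submartingale property alone gives only $\ex[Y(t)]>0$, which is compatible with $\set{Y\le 0\text{ on }[s,t]}$ having positive probability, and the $0$--$1$ law together with the optional-sampling contradiction is what closes that gap.

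For part (ii) I would reduce to (i). By the generalized It\^o formula for regime-switching diffusions, $Y(t)=Y(s)+\int_s^t\op_r^u\psi(X(r),\al(r))\,dr+(\text{local martingale})$, so the increasing part of $Y$ has density $\op_r^u\psi(X(r),\al(r))$. Since $\psi\in C^2$ and $b,\sg$ are continuous, the strict inequality \eqref{psi-strict-submg} at $x=0$ persists on a neighborhood $B(0,\delta)$; letting $\theta$ be the first exit of $X$ from $B(0,\delta)$ (with $\theta>s$ a.s., as $X(s)=0$ is interior), $Y$ is a strict local submartingale on $[s,\theta]$. Re-running the argument of (i) with $\rho$ replaced by $\rho\wedge\theta$ then yields \eqref{continuity-key-condition}, so (ii) implies \assumpref{assump-continuity-condition}.

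For part (iii) I would exhibit an explicit barrier: take $\psi(x,\al):=-x$. Then $\psi^+(x,\al)=x^-$ is Lipschitz with $L=1$ (so \eqref{psi-Lip} holds), $\psi(x,\al)=-x\le 0$ on $[0,\infty)$ (so \eqref{psi<=0-in-[0,infty)} holds), and $\psi^+(X(r),\al(r))=X(r)^-$, whence \eqref{continuity-key-condition} becomes $\int_s^t X(r)^-\,dr>0$ a.s.\ for all $t\in(s,T]$, i.e.\ $X$ must occupy $(-\infty,0)$ immediately. This is exactly what regularity of the boundary point $0$ for $(0,\infty)$ supplies, as made precise in Appendix~\ref{sect-appendix}: started at $0$ the process enters $(-\infty,0)$ at times arbitrarily close to $s$, and continuity of $X$ then makes it strictly negative on a whole subinterval, forcing the occupation integral to be positive on every $(s,t]$. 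Thus (iii) implies \assumpref{assump-continuity-condition}, and in each case the delicate point remains the passage to the almost-sure statement discussed above, here handled directly by path continuity rather than by the $0$--$1$ law.
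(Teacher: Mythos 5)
Your proposal is correct, and on parts (ii) and (iii) it is essentially the paper's own argument: for (ii) the paper likewise observes that the strict inequality \eqref{psi-strict-submg} persists for all $(x,i)$ in a neighborhood of $0$ (the switching term $\sum_j q_{ij}\psi(0,j)$ vanishing because $\psi(0,\cdot)=0$), so that $\psi(X(t),\al(t))$ is a strict local submartingale and (ii) reduces to (i); for (iii) the paper uses exactly your barrier $\psi(x,\al)=-x$ and defers to \thmref{thm-reg-pt}. Where you genuinely diverge is part (i), which the paper dispatches in one line: the strict submartingale property gives $\ex[\psi(X(t),\al(t))\mid \F_s]>\psi(X(s),\al(s))=0$ a.s., ``from which \eqref{continuity-key-condition} follows.'' Since the initial data $(s,0,\al)$ is deterministic and the post-$s$ noise is independent of $\F_s$, that conditional expectation is just $\ex[\psi(X(t),\al(t))]>0$, which by itself yields only $\pr\set{\int_s^t\psi^+(X(r),\al(r))\,dr>0}>0$ rather than $=1$ --- precisely the in-expectation-versus-almost-sure gap you flag. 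Your Doob--Meyer decomposition, localization before the first switch and ball exit, optional sampling at $t\wedge\rho$, and Blumenthal zero--one law supply the missing bridge, and they do so in the spirit of the paper itself: the appendix proof of \thmref{thm-reg-pt} also runs on the zero--one law plus a barrier contradiction, so your route in effect imports that technique inline, at the mild cost of reading ``strict'' as ``strictly increasing compensator,'' the interpretation under which $\ex[A(t\wedge\rho)]>0$ actually bites. Two pieces of housekeeping: at $Y(\rho)$ you invoke continuity of $Y=\psi(X,\al)$, but \eqref{psi-Lip} controls only $\psi^+$; this is harmless, since continuity of $Y^+$ suffices ($Y^+(r)=0$ for $r<\rho$ forces $Y^+(\rho)=0$ via the left limit, and $Y(t\wedge\rho)\le 0$ needs nothing more). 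And in (iii), regularity as defined in the appendix gives $X(t)\notin(0,\infty)$, i.e.\ $X(t)\le 0$, at times arbitrarily close to $s$; upgrading this to strict entry into $(-\infty,0)$, which is what makes $\int_s^t X(r)^-\,dr>0$, is an implicit strengthening (a degenerate path frozen at $0$ satisfies the literal definition but not \eqref{continuity-key-condition}) --- a step the paper's one-line proof of (iii) takes equally silently, so you are faithful to the paper here, though a nondegeneracy caveat would be warranted in both write-ups.
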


\begin{proof}
(i) Note that $\psi(X(s),\al(s))=\psi(0,\al)=0$.   Since $\set{\psi(X(t),\al(t)), t\in [s,T]}$
 is a strict local submartingale, we have
 $\ex[\psi(X(t),\al(t))|\F_s] > \psi(X(s),\al(s))=0$ a.s. for any $t \in (s,T]$,
 %and $\psi^+ \ge 0$, we must have
 from which \eqref{continuity-key-condition} follows.

 (ii) This is obvious since \eqref{psi-strict-submg} implies that
 \bed \op_t^u \psi(x,i)= b(t,x,i,u)\psi'(x,i) + \frac{1}{2} \sigma^2(t,x,i,u) \psi''(x,i) + \sum_{j=1}^m q_{ij} \psi(x,j) >0,  \ \ t\in[0,T] \eed
 for all $(x,i)\in N\times \M$, where $N$ is a neighborhood of $0$. As a result,
 $\set{\psi(X(t),\al(t)), t\in [s,T]}$
 is a strict local submartingale.

 (iii) If $0$ is a regular boundary point, then the function $\psi(x,\al):=-x$ satisfies the conditions in \assumpref{assump-continuity-condition}. We refer to the appendix and  \thmref{thm-reg-pt} for more discussions on regular boundary point.
\end{proof}

\begin{exm}\label{exm2}
Consider a uncontrolled surplus process given by
\bed dX(t)=2(t-1)dt + (t-X(t))^+ dw(t),\ \ t\in [s,2],\eed
with initial surplus $$X(s)=x>0,$$
where $s\ge 0$  and $w$ is a one-dimensional standard Brownian motion. Similar to \exmref{exm-discontinuous-value-function}, we define $\tau=\tau^{s,x}=\inf\set{t>s: X(t)=0}\wedge 2$ and $V(s,x)= \tau$.

As in \cite{FlemingS}, the signed distance to the boundary point $0$ is $\hat \rho(x)=-x, x\in \rr$. Then
$$ \op_t \hat\rho(0)= -2(t-1)\begin{cases} > 0 & \text{ for } t\in (0,1),\\
                                            <0 & \text{ for } t\in (1,2).\end{cases} $$
Hence the sufficient condition ($\op_t \hat \rho (0) > 0$, for all $t\in[0,2]$)
 for continuity of the value function
   % \assumpref{assump-continuity-condition}
given in \cite{FlemingS} fails.  See   \cite[equation (2.8), p. 202]{FlemingS} for more details.

 Nevertheless, we claim that $0$ is a regular boundary point for the domain $(0,\infty)$ and hence \assumpref{assump-continuity-condition} still holds true by virtue of \propref{prop-about-A3}.
 Consequently the value function is continuous.
To this end, we consider the function $\vphi(x)= -  x^2 +   x, x\in  (-\frac{1}{2},\frac{1}{2})$. It is easy to see that $\vphi$ satisfies conditions (i) and (ii) in \thmref{thm-reg-pt}.
Next we show that $\vphi$ satisfies condition (iii) in \thmref{thm-reg-pt} as well.
In fact,
\bed \begin{aligned}\op_t \vphi(0)& = 2(t-1)\vphi'(0) + \frac{1}{2}\((t-0)^+\)^2 \vphi''(0) \\
& = 2(t-1) - t^2 \\
& = -(t-1)^2 -1  \le -1 < 0, \ \ \forall t\in [0,2].\end{aligned}\eed
Then  it follows % from continuity
that $\op_t \vphi (x) < 0$ for all $ t\in [0,2]$ and $x\in N $, where $N\subset (-\frac{1}{2},\frac{1}{2})$ is a neighborhood of $0$.
Consequently, $\set{\vphi(X(t)), t\in [s,2]}$ is superharmonic in $(0,\infty)\cap U$.
Therefore \thmref{thm-reg-pt} implies that $0$ is a regular boundary point and hence \assumpref{assump-continuity-condition} is verified.
Further, we can readily verify that all other conditions in \thmref{thm-continuity} are satisfied and hence the value function $V$ is continuous.
\end{exm}

\begin{exm}
Suppose a controlled surplus process $X$ satisfies
\beq\label{exm3-state}  dX(t)= b(t,X(t),\al(t),u(t))dt +\sigma(t,X(t),\al(t),u(t))dw(t), \ \ t \ge s \ge 0, \eeq with initial conditions \bed X(s) =x >0, \ \ \al(s)=\al \in \set{1,2},\eed
where $w$ is a one-dimensional standard Brownian motion, $\al\in \set{1,2}$ is a continuous time Markov chain generated by $Q=\begin{pmatrix} -3 & 3 \\ 4 & -4 \end{pmatrix}$,
$u(t)\in [0,1]$ denotes the retention rate (so $1-u(t)$ is the proportion reinsured to a reinsurance company) at time $t$, and
\bed\begin{aligned}&  b(t,x,1,u)= \sin t +  x + 0.4-0.8 (1-u), &\ \  \sigma(t,x,1,u)= \sin t + 0.5 x+0.5 u, \\ & b(t,x,2,u)= \cos t +  3 x + 1-2 (1-u), &\ \ \sigma(t,x,2,u)= \cos t +  x+2  u .\end{aligned}\eed

Note that \eqref{exm3-state} represents a surplus process subject to
non-cheap reinsurance, investment in a Markovian-modulated Black-Scholes model, and seasonal fluctuations in premium collection.
This is motivated by the model considered in  \cite{Taksar-03}.
Denote $$\tau:= \inf\set{t> s: X(t)=0} \wedge 100.$$ The payoff for a reinsurance strategy $u\cd$ is
\bed J(s,x,\al,u) =\ex_{s,x,\al} \int_s^\tau e^{-rt} X(t)dt,\ \ s\in [0,100), \ x> 0, \ \al=1, 2, \eed where $r>0$ is the discounting factor.
The objective is to maximize the payoff and find a reinsurance strategy $u^*\cd$ such that
\beq\label{exm3-value-fn}
V(s,x,i)= \sup_{u\in \cal U} J(s,x,i,u) = J(s,x,i,u^*), \ \ s\in [0,100), \ x> 0, \ i=1,2.\eeq

We claim that the value function $V$ is continuous with respect to the variables $s$ and $x$ by virtue of \thmref{thm-continuity}. In fact, it is obvious that all conditions in
 Assumptions A\ref{assump-ito-condition}   are satisfied.
Next we use \thmref{thm-reg-pt} and \propref{prop-about-A3} to show that \assumpref{assump-continuity-condition} is also true and hence the claim follows.
To this end, we consider $\vphi(x,1)=-x^2 + 0.5 x$ and  $\vphi(x,2)=-x^2 + 2x$, where $x\in U:=(-0.25,0.25) $. Then we can easily verify that conditions (i) and (ii) in \thmref{thm-reg-pt}
 are satisfied. To verify condition (iii), we let $u=0.5$ and compute
 \bed \barray
 \op^{u=0.5}_t \vphi(0,1)\ad = b(t,0,1,0.5)\vphi'(0,1) + 0.5\sigma^2(t,0,1,0.5)\vphi''(0,1)  -3 \vphi(0,1) + 3 \vphi(0,2)
 \\ \ad =0.5 \sin t + 0.5 (\sin t + 0.5 \cdot 0.5)^2 (-2) \\
 \ad =-\sin^2 t - 0.0625 < 0, \ \text{ for any } t\in[0,100],
 \earray\eed
 and
 \bea \op^{u=0.5}_t \vphi(0,2)\ad = b(t,0,2,0.5)\vphi'(0,2) + 0.5\sigma^2(t,0,2,0.5)\vphi''(0,2)  +4 \vphi(0,1) -4 \vphi(0,2) \\
 \ad = 2\cos t + 0.5 (\cos t + 1)^2 (-2) \\
  \ad = -1 -\cos^2 t < 0, \ \text{ for any } t\in[0,100].
 \eea
 Hence it follows that $\vphi$ is superharmonic   in $\((0,\infty) \cap U\) \times \set{1,2}$ and condition (iii) in \thmref{thm-reg-pt} is verified.
 Thus by virtue of \thmref{thm-reg-pt} and \propref{prop-about-A3}, we conclude that \assumpref{assump-continuity-condition} holds and hence
 $V$ defined in \eqref{exm3-value-fn} is continuous.
\end{exm}

% Note that the sufficient conditions for continuity in \cite[p. 202]{FlemingS}
%and \cite{Bayraktar}  both fail for both regimes in this example.
%In fact, as in \exmref{exm2}, the signed distance function to the boundary point
% $0$ is  $\hat \rho (x) =-x, x\in  \rr$. Then for any $u \in [0,1]$
%\bed \op_t^u \hat \rho(0)= -(\sin t +  0.4-0.8(1-u) ) =- \sin t + 0.4 - 0.8 u, \eed
%which is sign-undetermined as $t$ ranges from $0$ to $100$.
%Thus the sufficient condition in
%\cite[p. 202]{FlemingS} fails.
%Next, we check the sufficient condition in \cite{Bayraktar}.
%Note that $$\hat \rho'(0)\sigma(t,0,1,u)= -(\sin t + 0.5 u) $$
%which is again sign-undetermined for any $u \in [0,1]$.
%Thus $\max\set{\op_t^u \hat \rho(0), \abs{\hat \rho'(0)\sigma(t,0,1,u)}} =0 $.
%Therefore the sufficient condition in \cite{Bayraktar}
%fails as well.

\section{Viscosity Solution}\label{sect-viscosity}
With the continuity of the value function and the dynamic programming principle, we can now characterize the value function to be a
viscosity solution of the HJB equation \eqref{eq:HJB}.
First we recall the notion of viscosity solution from \cite{FlemingS}.

\begin{defn}\label{defn-viscosity-soln}
A function $v$  is called a {\em viscosity subsolution} ({\em viscosity supersolution}, resp.) of \eqref{eq:HJB} if for any $\varphi(\cdot, \cdot, \al) \in C^{1,2}, \al \in \M$, whenever $v-\varphi$ attains a maximum (minimum, resp.) at $(\sxal)$ with $v(\sxal)=\varphi(\sxal)$, we have
\beq\label{eq-vis-sub-defn}
\frac{\partial}{\partial t} \varphi(\sxal) + \inf_{u\in U} \set{\op_s^u \varphi(\sxal) + l(\sxal,u)} \ge 0 \ (\le 0, \text{resp.})
\eeq
Further, a function $v$ is called a {\em viscosity  solution} of \eqref{eq:HJB} if it is both a viscosity subsolution and supersolution of \eqref{eq:HJB}.

\end{defn}

We first state a lemma, whose proof can be found in \cite{Bayraktar}.
\begin{lem}\label{lem-song}
For any $(\sxal)\in [0,T)\times (0,\infty) \times \M$ and $u\in \cal U$, define
\bed \theta:=\inf\set{t>s: X(t)\notin B(x,h)} \wedge (s+h^2), \eed
where $h\in (0,1)$ and $X=X^{\sxal;u}$. Then there exists a positive constant $\kappa$
such that \bed \ex[\theta-s] \ge \kappa h^2.\eed
Moreover, $\kappa=\kappa(s,x) $  is independent of the control $u$.
\end{lem}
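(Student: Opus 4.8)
The plan is to control the exit time through the second moment of the displacement $(X(t)-x)^2$, relating it to $\ex[\theta-s]$ from two opposite directions and then bootstrapping. The governing observation is that up to time $\theta$ the path never leaves $B(x,h)$, so $\abs{X(r)}\le \abs{x}+h\le \abs{x}+1$ for all $r\in[s,\theta]$; hence the linear growth bound in \assumpref{assump-ito-condition} gives $\abs{b(r,X(r),\al(r),u(r))}\le M$ and $\abs{\sigma(r,X(r),\al(r),u(r))}\le M$ on $[s,\theta]$ with $M:=\kappa_0(2+\abs{x})$, and crucially this $M$ does not depend on the control $u\in\cal U$. Since $(X(t)-x)^2$ does not involve $\al$, the regime-switching term in $\op^u_t$ plays no role, and $X$ being a continuous semimartingale, the ordinary one-dimensional It\^o formula applies.

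First I would apply It\^o's formula to $(X(t)-x)^2$ and evaluate at $t=\theta$, which is bounded since $\theta\le s+h^2\le T+1$. As the integrand of the stochastic integral, $2(X(r)-x)\sigma(r)$, is bounded by $2hM\le 2M$ on $[s,\theta]$, that term is a martingale with zero expectation, leaving
\bed \ex\big[(X(\theta)-x)^2\big]=\ex\int_s^\theta\big[2(X(r)-x)b(r)+\sigma^2(r)\big]\,dr.\eed
Bounding the integrand from above by $2hM+M^2\le 2M+M^2=:C$ then yields the first key inequality $\ex[(X(\theta)-x)^2]\le C\,\ex[\theta-s]$.

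Next I would produce a lower bound for the same second moment. By path continuity, on the event $\set{\theta<s+h^2}$ the stopping occurred at the exit of $B(x,h)$, so $(X(\theta)-x)^2=h^2$ there, giving $\ex[(X(\theta)-x)^2]\ge h^2\pr\set{\theta<s+h^2}$. On the other hand, trivially $\ex[\theta-s]\ge h^2\pr\set{\theta=s+h^2}=h^2\big(1-\pr\set{\theta<s+h^2}\big)$. Combining the three estimates eliminates the probability: writing $E:=\ex[\theta-s]$ and $p:=\pr\set{\theta<s+h^2}$, I have $h^2 p\le C E$ and $E\ge h^2(1-p)$, whence $E\ge h^2-CE$, that is, $\ex[\theta-s]\ge \frac{h^2}{1+C}$. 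Thus $\kappa=\frac{1}{1+C}=\frac{1}{1+2M+M^2}$ works, and it depends only on $x$ through $M$, hence is independent of $u$, as required.

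The computation itself is routine; the only point requiring care is that the desired lower bound cannot come from a single application of It\^o's formula, since It\^o only bounds $\ex[(X(\theta)-x)^2]$ from \emph{above} in terms of $\ex[\theta-s]$. One must pair this with the elementary truncation lower bound on $\ex[\theta-s]$ and cancel the exit probability $p$. Verifying uniformity in $u$ is then immediate, because every constant above traces back solely to the control-independent growth bound $M$.
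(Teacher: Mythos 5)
Your proof is correct, but note that there is no internal proof to compare it against: the paper states this lemma and defers its proof entirely to \cite{Bayraktar}, so what you have produced is a self-contained substitute for an omitted citation rather than a variant of an argument appearing in the text. Your two-sided bootstrap is sound, and the three points where it could go wrong are all handled: the generalized It\^o formula applied to $(X(t)-x)^2$ produces no switching term, since the function is independent of $\al$ and $\sum_{j=1}^m q_{\al j}=0$; the $X$-component of a regime-switching diffusion has continuous paths (only $\al$ jumps), which justifies both the identity $\abs{X(\theta)-x}=h$ on $\set{\theta<s+h^2}$ and the a priori bound $\abs{X(r)}\le \abs{x}+1$ on $[s,\theta]$ that feeds the control-uniform constant $M=\kappa_0(2+\abs{x})$; and the stochastic integral has integrand bounded by $2M$ up to the bounded stopping time $\theta\le s+h^2$, so it is a true martingale with zero expectation. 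The cancellation $E\ge h^2(1-p)\ge h^2-CE$ then yields $\kappa=(1+C)^{-1}=(1+M)^{-2}$, which depends only on $x$ and the constants of \assumpref{assump-ito-condition}, hence is uniform in $u$ (and indeed in $s$, slightly more than the lemma asserts). For comparison, the more common route to estimates of this type bounds $\pr\set{\theta-s<\gamma h^2}$ by $\pr\big\{\sup_{r\in[s,s+\gamma h^2]}\abs{X(r)-x}\ge h\big\}$ via Doob's or the Burkholder--Davis--Gundy maximal inequality and then chooses $\gamma$ small enough that this probability is at most $1/2$; your second-moment argument achieves the same conclusion using only optional stopping, with an explicit and cleaner constant, which is a perfectly acceptable trade.
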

%\begin{proof} \end{proof}

\begin{thm}\label{thm-viscosity-soln} Let  Assumptions {\em A\ref{assump-ito-condition}} and {\em A\ref{assump-continuity-condition}} be satisfied.
Then the value function \eqref{eq:value-sw} is a viscosity solution of \eqref{eq:HJB}.
\end{thm}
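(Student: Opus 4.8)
The plan is to use the dynamic programming principle \eqref{eq:dynamic-programming-principle}, which is available once $V$ is continuous (\thmref{thm-continuity}), together with the generalized It\^o formula for regime-switching diffusions (\cite{MaoY}) and the lower bound on exit times in \lemref{lem-song}. Following \defref{defn-viscosity-soln}, I would verify the subsolution and supersolution inequalities separately, arguing by contradiction in each case. A preliminary observation drives both halves: since a test function $\vphi$ is required to touch $V$ over the whole state space $[0,T)\times(0,\infty)\times\M$, if $V-\vphi$ has a maximum (resp.\ minimum) at $(\sxalz)$ with $V(\sxalz)=\vphi(\sxalz)$, then on a neighborhood of $(\sz,\xz)$ one has $V(t,y,j)\le \vphi(t,y,j)$ (resp.\ $\ge$) for \emph{every} $j\in\M$, not merely for $j=\al_0$. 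This multi-state ordering is what will let me control the switching term $\sum_j q_{\al_0 j}\vphi(\cdot,\cdot,j)$ carried by $\op^u_t$.

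For the subsolution property, suppose $V-\vphi$ attains a maximum at $(\sxalz)$ but, contrary to \eqref{eq-vis-sub-defn}, $\frac{\partial}{\partial t}\vphi(\sxalz)+\min_{u\in U}\set{\op^u_{\sz}\vphi(\sxalz)+l(\sxalz,u)}<0$. As $U$ is compact the minimum is attained at some $\hat u$, and by continuity of the coefficients and of the derivatives of $\vphi$ there is a neighborhood of $(\sz,\xz)$ and an $\eta>0$ on which $\frac{\partial}{\partial t}\vphi(t,y,\al_0)+\op^{\hat u}_t\vphi(t,y,\al_0)+l(t,y,\al_0,\hat u)\le -\eta$. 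I would then run the constant control $u(t)\equiv \hat u$, apply the one-sided inequality $V(\sxalz)\le \ex[\int_{\sz}^{\theta}l\,dt+V(\theta,X(\theta),\al(\theta))]$ with the stopping time $\theta$ of \lemref{lem-song} (for $h<\xz$ one has $\theta\wedge\tau=\theta$), and apply It\^o's formula to $\vphi$. Splitting the expectation by whether $\al$ switches on $[\sz,\theta]$: on the no-switch event $\al\equiv\al_0$, so the local inequality applies and, since $\ex[\theta-\sz]\ge\kappa h^2$, yields a term of order $h^2$; the switch event has probability $O(h^2)$ and, over a time interval of length at most $h^2$ with bounded integrand, contributes only $O(h^4)$. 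Replacing $V(\theta,\cdot,\al(\theta))$ by $\vphi(\theta,\cdot,\al(\theta))$ via the full-state ordering $V\le\vphi$, I obtain $V(\sxalz)\le V(\sxalz)-c\,h^2+O(h^4)$ with $c>0$, a contradiction for small $h$.

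For the supersolution property the roles of the controls reverse: assuming $\frac{\partial}{\partial t}\vphi(\sxalz)+\min_{u\in U}\set{\op^u_{\sz}\vphi(\sxalz)+l(\sxalz,u)}>0$, the inequality now holds for \emph{all} $u\in U$, hence on a neighborhood for every value the control may take. I would invoke the opposite half of the dynamic programming principle: for each $\e>0$ choose a near-optimal control $u^\e$ with $\ex[\int_{\sz}^{\theta}l\,dt+V(\theta,X(\theta),\al(\theta))]\le V(\sxalz)+\e$, apply It\^o to $\vphi$ along $u^\e$, use $V\ge\vphi$ on the neighborhood, and run the same switch/no-switch decomposition to arrive at $\vphi(\sxalz)+c'h^2-O(h^4)\le \vphi(\sxalz)+\e$; taking $\e=\tfrac{1}{2}c'h^2$ produces the contradiction. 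It is essential here that the constant $\kappa$ of \lemref{lem-song} and the $O(h^4)$ switching bound are uniform in the control, which holds because $\al\cd$ is an autonomous chain with generator $Q$ independent of $u$ and because $b,\sg,l$ and the derivatives of $\vphi$ are uniformly bounded on the compact neighborhood.

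The main obstacle is precisely the regime-switching term. In the classical diffusion case it suffices for $\vphi$ to touch $V$ at the single point, but here the generator couples all discrete states through $\sum_j q_{\al_0 j}\vphi(\cdot,\cdot,j)$, and after a jump the comparison between $V$ and $\vphi$ must still hold at the new state $\al(\theta)\neq\al_0$. Both issues are resolved by the two ingredients above: the full-state touching condition of \defref{defn-viscosity-soln}, which orders $V$ and $\vphi$ simultaneously in every state, and the estimate that a switch within time $h^2$ is an $O(h^2)$-probability event, so that every state-switching contribution is of higher order $O(h^4)$ and cannot overwhelm the order-$h^2$ gain supplied by \lemref{lem-song}.
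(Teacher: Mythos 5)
Your proposal is correct and follows essentially the same route as the paper's proof: argue by contradiction in each half, use the dynamic programming principle (available by \thmref{thm-continuity}), apply It\^o's formula to the test function, invoke the control-uniform exit-time bound $\ex[\theta-\sz]\ge \kappa h^2$ of \lemref{lem-song}, and show the regime-switching contribution is $O(h^4)$ so the order-$h^2$ term dominates, with $\e=O(\delta\kappa h^2)$ for the near-optimal control in the supersolution half exactly as in the paper. Your switch/no-switch event decomposition is only a cosmetic variant of the paper's device of freezing the discrete state at $\al_0$ in the integrand and bounding the correction terms $A$ and $B$ by $Kh^4$ via $\pr\set{\al(r)=j\,|\,\al(\sz)=\al_0}\le K(r-\sz)$ for $j\neq\al_0$.
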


\begin{proof} The proof is inspired by \cite{Bayraktar}; we use similar ideas. We first establish the viscosity subsolution property of the value function $V$ in Step 1, followed by viscosity supersolution in Step 2.

Step 1. We first prove that $V$ is a viscosity subsolution of \eqref{eq:HJB}. Suppose it was not the case,
then there would exist some $\varphi \in C^{1,2}$, a $u \in U$,
and a maximizer  $(\sxalz)\in [0,T)\times (0,\infty)\times\M$ of $V-\varphi$ with $V(\sxalz)=\varphi(\sxalz)$, but \beq\label{eq-vis-sub-contrad1}
\frac{\partial}{\partial t} \varphi(\sxalz) +  \op_{\sz}^u \varphi(\sxalz) + l(\sxalz,u)< -\delta < 0,
\eeq
where $\delta >0$. Then by the continuity of the function $l(\cdot,\cdot, \al_0,u) + (\frac{\partial}{\partial t}+ \op_t^u)\varphi(\cdot,\cdot, \al_0)$, there exists an $h \in (0,1)$ such that
\beq\label{eq-vis-contrad2} \frac{\partial}{\partial t} \varphi(s,x,\al_0) + \op_s^u \varphi(s,x,\al_0) + l(s,x,\al_0,u) < -\delta/2 < 0,\eeq
 for all $(s,x) \in [s_0,s_0+ h^2) \times B(x_0,h).$
Without loss of generality, we assume that $h<1$ is sufficiently small so that $[s_0,s_0+ h^2) \times B(x_0,h) \subset [0,T) \times (0,\infty)$.
Denote $X=X^{\sxalz;u}$ and define
\bed \theta:=\inf\set{t> s_0: X(t) \notin B(x_0,h)} \wedge (s_0+h^2).\eed
Note that $\theta < \tau$ a.s. By virtue of the dynamic programming principle \eqref{eq:dynamic-programming-principle}, we have
\beq\label{eq-vis-dyna} V(\sxalz) \le \ex \left[\int_{s_0}^\theta l(r,X(r),\al(r),u)dr + V(\theta,X(\theta),\al(\theta))\right].\eeq
Using the assumptions on $\varphi$, we can derive from \eqref{eq-vis-dyna}
that
\bed 0 \le \ex \left[\int_{\sz}
^\theta l(r,X(r),\al(r),u)dr + \varphi(\theta, X(\theta),\al(\theta))-\varphi(\sxalz)\right].\eed
Apply It\^o's formula to $\varphi$,
\bed \ex \varphi(\theta, X(\theta),\al(\theta))-\varphi(\sxalz)
= \ex \int_{\sz}^\theta \(\frac{\partial}{\partial t} + \op^u_r\) \varphi(r,X(r),\al(r))dr.\eed
Hence it follows from \eqref{eq-vis-contrad2} and \lemref{lem-song}  that
\beq\label{eq-vis-sub-eq1} \barray
0\ad \le \ex  \int_{\sz}
^\theta \left[ l(r,X(r),\al(r),u)+\bigg(\frac{\partial}{\partial t} + \op^u_r\bigg) \varphi(r,X(r),\al(r)) \right]dr \\[2ex]
\ad = \ex    \int_{\sz}
^\theta \left[ l(r,X(r),\al_0,u)+\bigg(\frac{\partial}{\partial t} + \op^u_r\bigg) \varphi(r,X(r),\al_0) \right]dr + A \\ [2ex]
\ad \le \ex  \int_{\sz}^\theta (-\frac{\delta}{2})dr + A
% \\ [2ex] \ad
% = -\frac{\delta}{2} \ex [\theta-s_0] + A
  \\[2ex] \ad
\le -\frac{\delta}{2} \kappa h^2 + A,
\earray\eeq
where $\kappa$ is the constant in \lemref{lem-song}, and
\bed \begin{aligned} A= \ex\int_{\sz}^\theta  \bigg[ & l(r,X(r),\al(r),u)+\bigg(\frac{\partial}{\partial t} + \op^u_r\bigg) \varphi(r,X(r),\al(r)) \\  & \ \ - l(r,X(r),\al_0,u)-\bigg(\frac{\partial}{\partial t} + \op^u_r\bigg) \varphi(r,X(r),\al_0) \bigg]dr.
\end{aligned}\eed
Next we show that $A$ is negligible compared to the term $-\frac{\delta}{2}\kappa h^2$. To this end, we denote $$H(\sxal,u):= l(\sxal,u) + \(\frac{\partial}{\partial t}+ \op^u_s\)\varphi(s,x,\al).$$
Then for each $\al \in \M$ and $u \in U$, as a function of $(s,x)$, $H$ is continuous and hence bounded on the compact $[s_0, s_0+1] \times \bar B(x_0,1)$. Therefore we compute from \eqref{Q-gen} that
\beq\label{eq-vis-sub-eq2} \barray
A \ad = \ex \int_{\sz}^\theta [H(r,X(r),\al(r),u) - H(r,X(r),\al_0,u)]dr \\
\ad \le \ex \int_{\sz}^\theta \sum_{j\not= \al_0} \abs{H(r,X(r),j,u)- H(r,X(r),\al_0,u)}I_{\set{\al(r)=j}}dr \\
\ad \le K \int_{\sz}^{\sz+h^2}  \sum_{j\not= \al_0} \pr \set{\al(r)=j|\al(s_0)=\al_0}dr \\
\ad \le K   \int_{\sz}^{\sz+h^2} (r-\sz) dr
 %\\ \ad
 = K h^4,
\earray\eeq
where $K$ is some positive constant independent of $h$ and $u$.
Then it follows from \eqref{eq-vis-sub-eq1} and \eqref{eq-vis-sub-eq2} that for $h>0$ sufficiently small, we have
$$ 0 \le -\frac{\delta}{2} \kappa h^2 + K h^4 < 0.$$
This is a contradiction. Hence the value function $V$ must be a viscosity subsolution of \eqref{eq:HJB}.

Step 2. Now we show that $V$
is a viscosity supersolution of \eqref{eq:HJB}. Again, we use a contradiction argument. Suppose on the contrary that $V$  was not  a viscosity supersolution of \eqref{eq:HJB}. Then there would exist a $\phi \in C^{1,2}$ and a minimizer $(\sxalz)\in [0,T)\times (0,\infty) \times \M$ of $V-\phi$ with $V(\sxalz)=\phi(\sxalz)$, but
\beq\label{vis-super-eq1}\frac{\partial}{\partial t} \phi(\sxalz) + \inf_{u\in U}\set{\op_{s_0}^u \phi(\sxalz)+ l(\sxalz,u)}= \delta >0,  \eeq
where $\delta $ is a constant.
By Assumption A\ref{assump-ito-condition}, % and \ref{assump-lg},
the function $$(s,x)\mapsto \frac{\partial }{\partial t} \phi(s,x,\al)+ \inf_{u\in U}\set{\op_{s}^u \phi(\sxal)+ l(\sxal,u)}$$ is continuous for each $\al\in \M$. Hence we can find an $h>0 $ such that
\beq\label{vis-super-eq2} \frac{\partial }{\partial t} \phi(s,x,\al_0)+ \inf_{u\in U}\set{\op_{s}^u \phi(s,x,\al_0)+ l(s,x,\al_0,u)} > \frac{\delta}{2}, \ \forall (s,x)\in [s_0,s_0+ h^2) \times B(x_0,h). \eeq
Let $\e= \frac{\delta}{4} \kappa h^2$, where $\kappa$ is the constant in \lemref{lem-song}. Let $u\in \cal U$ be an $\e$-optimal control and denote $X=X^{\sxalz; u}$. Put $\theta:=\inf\set{t> \sz: X(t) \notin B(x_0,h)} \wedge (\sz+h^2)$. Then it follows that
\beq\label{vis-super-ineq0} V(\sxalz) \ge \ex \int_{\sz}^\theta l(r,X(r),\al(r), u(r))dr +\ex V(\theta, X(\theta),\al(\theta)) -\e.\eeq
As in Step 1, using the assumptions and It\^o's formula on $\phi$, we can rewrite \eqref{vis-super-ineq0}
 as \bed \barray
0 \ad \ge \ex \left[ \int_{\sz}^\theta l(r,X(r),\al(r),u(r))dr + \phi(\theta, X(\theta),\al(\theta))- \phi(\sxalz)\right] -\e \\
\ad = \ex  \int_{\sz}^\theta\left[  l(r,X(r),\al(r),u(r)) + \(\frac{\partial}{\partial t} + \op_r^{u(r)}\) \phi(r,X(r),\al(r))\right]dr -\e.
\earray\eed
But $\e= \frac{\delta}{4} \kappa h^2$. This, together with \lemref{lem-song}, leads to
\beq\label{vis-super-eq3} \barray 0 \ad \ge \ex  \int_{\sz}^\theta\left[  l(r,X(r),\al(r),u(r)) + \(\frac{\partial}{\partial t} + \op_r^{u(r)}\) \phi(r,X(r),\al(r))\right]dr - \frac{\delta}{4} \ex [\theta -\sz] \\[2ex]
\ad = \ex  \int_{\sz}^\theta\left[  l(r,X(r),\al(r),u(r)) + \(\frac{\partial}{\partial t} + \op_r^{u(r)}\) \phi(r,X(r),\al(r)) -\frac{\delta}{4}  \right]dr\\[2ex]
\ad = \ex  \int_{\sz}^\theta\left[  l(r,X(r),\al_0,u(r)) + \(\frac{\partial}{\partial t} + \op_r^{u(r)}\) \phi(r,X(r),\al_0) -\frac{\delta}{4}  \right]dr + B,  \earray \eeq
where $$\begin{aligned} B= \ex \int_{\sz}^\theta \bigg[ & l(r,X(r),\al(r),u(r)) + \(\frac{\partial}{\partial t} + \op_r^{u(r)}\) \phi(r,X(r),\al(r)) \\ & -
l(r,X(r),\al_0,u(r)) - \(\frac{\partial}{\partial t} + \op_r^{u(r)}\) \phi(r,X(r),\al_0) \bigg]dr. \end{aligned} $$
Using the same argument as that in Step 1, we deduce that for some constant $K$ independent of $h$ and $u$,  \beq\label{vis-super-eq4}\abs{B} \le K h^4.\eeq
Therefore it follows from \eqref{vis-super-eq2}--\eqref{vis-super-eq4} that
\bed
0    \ge \ex \int_{\sz}^\theta (\frac{\delta}{2}-\frac{\delta}{4}) dr  -  \abs{B}
  \ge \frac{\delta}{ 4}\kappa h^2 - K h^4 >0,
 \eed
 for $h>0$ sufficiently small. This is a contradiction. Therefore $V$ is a viscosity supersolution of \eqref{eq:HJB}. This completes the proof.
\end{proof}

%\subsection{Uniqueness}

\section{Conclusions and Remarks}\label{sect-conclusions}

In this work, we considered cost optimization  problem  for an insurance company. The surplus of the insurance company was modeled by a controlled regime-switching diffusion.
We presented a sufficient condition for the continuity of the value function and further characterized it as a viscosity solution of the HJB equation
\eqref{eq:HJB}.
Compared with the usual diffusion models, the consideration of regime-switching mechanism   provides a better approximation to the real-world dynamics.
The novelty of this work also includes a new sufficient condition for continuity  of the value function. The
sufficient condition in this paper
 %was  presented in terms of the regularity of the boundary point.
%which
is a new generalization of the one in \cite{FlemingS}.

A number of other questions deserve further investigations.
In particular, the next logical step is to
%find condition(s) under which
establish a strong comparison result (\cite{CIL92}), which, in turn,
 implies that the value function is the {\em unique} viscosity solution of the HJB equation \eqref{eq:HJB}.
 Consequently, we have the complete characterization of the value function.
  It is conceivable that due to the presence of regime switching, the analysis will be much more involved than
  that in the user's guide \cite{CIL92}.
  Also,
we were not able  to obtain the explicit form of the value function and an optimal control by solving
\eqref{eq:HJB}.
The reason for this is that \eqref{eq:HJB} is a coupled system of nonlinear
second order partial differential equations, rendering
extreme difficulty in finding a closed form solution
of \eqref{eq:HJB}. % is virtually impossible.
Therefore a viable alternative is to employ numerical approximations.
The Markov chain approximation method developed in \cite{Kushner-D} will be
utilized in the near future.
We may also consider  more complicated stochastic models such as
regime-switching diffusions with jumps as well as other optimality criteria
such as dividend maximization and ruin probability minimization problems.

 \appendix\section{Regular Boundary Point}\label{sect-appendix}
 This appendix provides a result on regular boundary points. For notational simplicity, we shall present  the  result when the continuous component $X$
 is 1-dimensional. The multi-dimensional case can be handled in a similar fashion.
 Note that the notations in the appendix are not necessarily the same as  those in the main part of the paper.

 Let $(X,\al)\in \rr \times \M$ be a switching diffusion process, where $ \M=\set{1,\dots, m}$.
The generator $\cal G$ of $(X,\al)$ is defined as follows. For any $h(\cdot, \cdot, i)\in C^{1,2}, i\in \M$, we define
\beq\label{app-generator}\begin{aligned} {\cal G} h(t,x,i) =&  \frac{\partial }{\partial t} h(t,x,i)+   h'(t,x,i) b(t,x,i)  \\ & + \frac{1}{2}  h''(t,x,i) \sigma^2(t,x,i)    + \sum_{j=1}^m q_{ij} h(t,x,j), \end{aligned}\eeq
 where $  h'$ and $ h''$ denote the first and second order derivatives  of $h$ with respect to the variable $x$, respectively, $b,\sigma:[0,\infty)\times\rr\times \M \mapsto \rr$ are given functions,
and $q_{ij}$ are constants satisfying $q_{ij} \ge 0$ for $i\not =j$ and $q_{ii}=-\sum_{j\not= i}q_{ij}$.
Further, we assume that   $b $ and $\sigma  $ satisfy
\beq\label{app-ito}
\begin{aligned}
&\abs{b(t,x,i)-b(t,y,i)} + \abs{\sigma(t,x,i)-\sigma(t,y,i)} \le K \abs{x-y}, \\ & \abs{b(t,x,i)} + \abs{\sigma(t,x,i)} \le K (1+\abs{x}), \ \
 \forall t\in [0, \infty),
 x,y\in \rr, i \in \M,
\end{aligned}
\eeq
where $K$ is a positive constant. It is well-known (\cite{MaoY}) that under these conditions, for any $s\ge 0$, $x\in \rr $, and $\al \in \M$, the generator \eqref{app-generator} uniquely determines a switching process $(X\cd,\al\cd)$ with initial conditions $X(s)=x$ and $\al(s)=\al$. Denote such a process by $(X^{s,x,\al}, \al^{s,\al})$ if the emphasis on initial conditions are needed.

 Let $G $ be an open subset of $\rr $ and $a\in \partial G$. The point $a$
 is said to be  {\em regular}  for the process $(X,\al)$ in $G$ if for any
 $s\ge 0$ and $\al\in \M$ we have
 \bed \pr \set{\tau=s}=1,\eed
 where $$\tau=\tau^{s,a,\al}:=\inf\set{t>s: X^{s,a,\al}(t) \notin G}$$ denotes the first exit time for the process $(X,\al)$ from $G$.

 \begin{thm}\label{thm-reg-pt}
The point $a\in \partial G$ is a regular point if
 %and only if
there exist a neighborhood $U$ of $a$ and  a function $\varphi: U\times \M \mapsto \rr$ such that
\begin{itemize}
\item[{\em (i)}] $\varphi(x,i) > 0$ for all $x\in G\cap U-\set{a}$ and each $i \in \M$;
\item[{\em (ii)}] $\lim_{x\to a, x\in G} \varphi(x,i)=0$ for each $i\in \M$; and
\item[{\em (iii)}] $\varphi$ is {\em superharmonic} in $(G\cap U) \times \M$, that is, $\varphi$ is bounded below and continuous in $(G\cap U) \times \M$ and satisfies
\beq\label{app-su-homonic} \varphi(x,\alpha ) \ge \ex  \varphi(X^{s,x,\alpha }(\tau_V),\al^{s,\alpha }(\tau_V)), \ \ \forall (x,\alpha )\in V, \eeq
where $s\ge 0$, $V \subset (G\cap U) \times M$, and $\tau_V=\inf\set{t>s: (X^{s,x,\alpha }(t),\al^{s,\alpha }(t))\notin V}$.
\end{itemize}
\end{thm}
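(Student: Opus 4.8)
The plan is to use $\varphi$ as a \emph{barrier}: superharmonicity makes $\varphi(X(\cdot),\al(\cdot))$ a supermartingale up to the exit from a neighbourhood, its vanishing at $a$ forces that supermartingale to start arbitrarily small, and its strict positivity away from $a$ penalizes the process for wandering away from $a$ without first leaving $G$. The quantity I would aim to control is the statement that, started at $a$, the process reaches $\partial G$ before it can escape any small ball about $a$; shrinking the ball then pins the exit time down to $s$, which is exactly regularity.

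First I would fix $r>0$ with $\overline{B(a,r)}\subset U$ and set $V:=(G\cap B(a,r))\times\M$, so that $G\cap B(a,r)\subset G\cap U$ and condition (iii) applies verbatim on $V$. Since $\partial B(a,r)\cap\overline G$ is compact and does not contain $a$, conditions (i) and the continuity in (iii) give $\eta:=\min\set{\varphi(y,i):y\in\partial B(a,r)\cap\overline G,\ i\in\M}>0$. Because the chain component ranges over all of $\M$, exits from $V$ occur only through the continuous component $X$, so $\tau_V=\tau\wedge\sigma_r$ with $\tau=\inf\set{t>s:X(t)\notin G}$ and $\sigma_r=\inf\set{t>s:X(t)\notin B(a,r)}$, and $X(\tau_V)$ lies in $(\partial G\cap\overline{B(a,r)})\cup(\partial B(a,r)\cap\overline G)$. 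On the first piece $\varphi\ge0$ (each such boundary point is a limit of interior points where $\varphi>0$), while on the second $\varphi\ge\eta$. Hence (iii) yields, for every $(x,i)\in V$,
\[ \varphi(x,i)\ \ge\ \ex\,\varphi(X(\tau_V),\al(\tau_V))\ \ge\ \eta\,\pr\set{\sigma_r<\tau}, \]
so that $\pr_{x,i}\set{\sigma_r<\tau}\le \varphi(x,i)/\eta$. This is the barrier estimate: from a point deep inside $G$ near $a$, the chance of leaving the ball before leaving $G$ is controlled by $\varphi$, hence small by (ii). Note that (iii) holds for every starting time, so this bound is uniform in the (time-inhomogeneous) start time.

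The hard part is transferring this estimate to the process started at the boundary point $a$ itself, where (iii) is unavailable because $(a,i)\notin(G\cap U)\times\M$. Here I would condition at a small time $t>s$ and use the Markov property: on the event $\set{\min(\tau,\sigma_r)>t}$ the process still sits in $G\cap B(a,r)$ at time $t$, and the conditional probability of $\set{\sigma_r<\tau}$ given $\F_t$ equals the barrier quantity evaluated at $(X(t),\al(t))$, hence is at most $\varphi(X(t),\al(t))/\eta$. Splitting $\pr_a\set{\sigma_r<\tau}$ according to whether $\min(\tau,\sigma_r)\le t$ and letting $t\downarrow s$, the first term vanishes because $\sigma_r>s$ almost surely, while the second vanishes because $X(t)\to a$ with $\varphi(a,\cdot)=0$ by (ii) and $\varphi$ bounded near $a$. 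This gives $\pr_a\set{\sigma_r<\tau}=0$: started at $a$, the process reaches $\partial G$ before $\partial B(a,r)$.

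Finally, since $\sigma_{r'}\le\sigma_r$ for $r'<r$, the events $\set{\sigma_r<\tau}$ increase as $r\downarrow0$ with union $\set{\tau>s}$, and $\sigma_r\downarrow s$ by path-continuity; hence $\pr_a\set{\tau>s}=\lim_{r\downarrow0}\pr_a\set{\sigma_r<\tau}=0$, so $\tau=s$ almost surely for every $s$ and $\al$, which is regularity. I expect the transfer step to $a$ to be the principal obstacle, together with the switching-specific bookkeeping: justifying that $\varphi(X(\cdot),\al(\cdot))$ is a genuine supermartingale under the full generator $\cal G$ (so that the integral form (iii) is consistent) and that exits of $V$ are driven solely by the continuous component $X$. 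The Lipschitz and growth bounds \eqref{app-ito} underlie both the well-posedness of $(X,\al)$ and the limiting arguments in $t$ and in $r$.
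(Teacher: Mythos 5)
Your barrier argument is sound in its essentials, but it takes a genuinely different route from the paper. The paper argues by contradiction: assuming $a$ is not regular, it invokes the Blumenthal zero--one law to upgrade $\pr\set{\tau>s_0}>0$ to $\pr\set{\tau>s_0}=1$, extracts a $\delta>0$ with $\pr\set{\sup_{t\in[s_0,\tau)}\dist(a,X(t))>\delta}>0$, and then shows, by splitting at a small time $t$ and using the strong Markov property, superharmonicity, condition (ii), and the normalization $\vphi\le 1$, that $\ex\left[I_{\set{\tau_0<\tau}}\vphi(X(\tau_0),\al(\tau_0))\right]=0$; this contradicts the \emph{pointwise} positivity of $\vphi$ at the exit point on an event of positive probability. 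Your transfer step (conditioning at time $t$ and letting $t\downarrow s$) is essentially the paper's $I_1+I_2$ splitting redeployed in a direct framework. What your route buys: it dispenses with the zero--one law entirely and yields the quantitative estimate $\pr_{x,i}\set{\sigma_r<\tau}\le\vphi(x,i)/\eta$, after which regularity follows from the clean monotone limit $\bigcup_{r>0}\set{\sigma_r<\tau}=\set{\tau>s}$ (correct: on $\set{\tau>s}$ the path visits $G\setminus\set{a}$ before $\tau$, so $\sigma_r<\tau$ for $r$ small). What the paper's route buys is that it never needs a \emph{uniform} lower bound $\eta$, only $\vphi>0$ pointwise on a positive-probability event --- and that is exactly where your write-up has its one real slip.

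Specifically, your claim that compactness of $\partial B(a,r)\cap\overline G$ together with (i) and (iii) gives $\eta>0$ over-reaches: condition (i) and the continuity clause of (iii) hold only on the open set $(G\cap U)\times\M$, not on $\overline G$, so points of $\partial B(a,r)\cap\partial G$ are not covered; and in dimension greater than one the relatively open set $\partial B(a,r)\cap G$ is not compact, so $\inf\vphi$ there may vanish. In the one-dimensional setting actually treated in the appendix the step is repairable: $\partial B(a,r)=\set{a-r,a+r}$, and on $\set{\sigma_r<\tau}$ the exit point lies in $\set{a-r,a+r}\cap G$, a finite set of points of $G\cap U$ where (i) applies, so taking $\eta$ as the minimum of $\vphi$ over that finite set (times $\M$) gives $\eta>0$; in higher dimensions your argument needs extra input precisely where the paper's contradiction argument does not. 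Two smaller points: your appeal to ``$\vphi$ bounded near $a$'' in the transfer step is not justified as stated (for fixed $t>s$, $X(t)$ ranges over all of $G\cap B(a,r)$, where continuity alone gives no upper bound), but it is fixable either by noting $\pr\set{\sigma_r<\tau\,|\,\F_t}\le \min\set{1,\vphi(X(t),\al(t))/\eta}$ and splitting on $\set{X(t)\in N}$ with $N$ the neighborhood from (ii), or by the paper's truncation $\vphi\wedge 1$, which still satisfies (i)--(iii). Finally, your parenthetical that $\vphi\ge 0$ on $\partial G\cap\overline{B(a,r)}$ because such points are limits of interior points tacitly uses continuity of $\vphi$ up to the boundary, which (iii) does not grant; note, however, that the paper's own step \eqref{app-I2-eq1} drops the indicator $I_{\set{\tau_0<\tau}}$ under the same implicit reading of \eqref{app-su-homonic}, so this affects both proofs equally rather than being a defect of yours alone.
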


\begin{proof} The proof is motivated by \cite[Chapter 13]{Dynkin-II}, we use similar ideas.
We divide the proof into  several steps.
 %two steps: the first step it devoted to the sufficiency while the second step established the necessity.

Step 1. Let $U$ and $\varphi$ be as in the statement of the theorem. Without loss of generality, we assume \beq\label{app-vphi-1}\sup_{(x,i)\in U \times \M}\vphi(x,i) \le 1 .\eeq
In fact, it is not hard to see that if $\vphi$ satisfies (i)--(iii), then so does the function $\vphi\wedge 1 =\min(\vphi,1)$.
%  also satisfies conditions (i) and (ii). To verify (iii), let $(s;x,i)$,  $V$, $(X^{s;x,i},\al^{s;i})$,
%  and $\tau_V$ as in the statement of the theorem.
%  Also, for notational simplicity, denote $(X,\al)=(X^{s;x,i},\al^{s;i})$.
%  If $\vphi(x,i) >1$, then \bed \ex  [(\vphi\wedge 1)(X(\tau_V),\al(\tau_V))] \le 1 = (\vphi\wedge 1)(x,i).\eed
%  On the other hand, if $\vphi(x,i) \le 1$, then
%  \bed \ex [(\vphi\wedge 1)(X(\tau_V),\al(\tau_V))] \le \ex [\vphi(X(\tau_V),\al(\tau_V))]
%  \le \vphi(x,i) = (\vphi\wedge 1)(x,i).\eed
%  Hence (iii) is verified and therefore we can assume \eqref{app-vphi-1}.

Suppose that $a$ is not regular, then we have
\beq\label{app-contradiction}
\pr  \set{\tau >\sz} >0 \text{ for some }\sz\ge 0 \text{ and }\ell \in \M,
\eeq where $\tau=\tau^{\sz,a,\ell}=\inf\set{t>\sz: X^{\sz,a,\ell}(t) \notin G}$.
Then, by virtue of the Blunmenthal Zero-One Law (\cite{Karatzas-S}),
\beq\label{app-0-1}
\pr  \set{\tau >\sz} =1.
\eeq
Obviously, \bed \set{X(\sz)=a, \al(\sz)=\ell,\tau> \sz} \subset \set{\sup_{t\in[\sz,\tau)}\dist(a,X(t)) >0}.\eed
Hence it follows that
\bed \pr  \set{\sup_{t\in[\sz,\tau)}\dist(a,X(t)) >0} >0.\eed
Therefore for some $\delta >0$, we have
\beq\label{app-dist>delta} \pr  \set{\sup_{t\in[\sz,\tau)}\dist(a,X(t)) >\delta} >0.\eeq

Step 2.
Now set $G_0 =G\cap B(a,\delta)$ and $\tau_0=\inf\set{t>\sz, X(t) \notin G_0}$. Note that by choosing $\delta$ sufficiently small, we may without loss of generality assume  that $G_0 \subset U$.
Now  for any $t> \sz$, we can write
\beq\label{app-I1+I2} \begin{aligned}
& \ex \left[I_{\set{\tau_0 < \tau}} \vphi(X(\tau_0),\al(\tau_0))\right] \\
 & \ \ = \ex\left[I_{\set{\tau_0 < \tau,\tau_0 \le t}} \vphi(X(\tau_0),\al(\tau_0))\right] + \ex\left[I_{\set{\tau_0 < \tau, \tau_0>t}} \vphi(X(\tau_0),\al(\tau_0))\right] \\
  &\ \, := I_1(t)+ I_2(t).
\end{aligned}\eeq
Using \eqref{app-vphi-1}, \eqref{app-0-1}, and the continuity of the sample paths of $X$, we have
\beq\label{app-I1-to-0}
\lim_{t\downarrow \sz} I_1(t) \le \lim_{t \downarrow \sz} \pr\set{ \tau_0 \le t} \le \pr\set{\tau_0 =\sz} =0.
\eeq
On the other hand, it follows from the strong Markov property and \eqref{app-su-homonic} that
\beq\label{app-I2-eq1} \begin{aligned}
I_2(t) &= \ex\left[I_{\set{\tau_0 < \tau, \tau_0 >t}} \vphi(X(\tau_0),\al(\tau_0))\right]\\
 & = \ex\left[I_{\set{\tau_0 > t}} \ex_{X(t),\al(t)}[I_{\set{\tau_0 < \tau}} \vphi(X(\tau_0),\al(\tau_0))]\right] \\
 & \le \ex\left[I_{\set{\tau_0 > t}} \vphi(X(t),\al(t))\right].
\end{aligned}\eeq
Thanks to condition (ii), for any $\e >0$, we can choose a neighborhood $N$ of $a$ such that
\beq\label{app-I2-eq2} \vphi(x,i) < \e, \text{ for any }(x,i)\in (N\cap G) \times \M. \eeq
Also, since the sample paths of $X$ are continuous (see, for example, \cite{MaoY} or \cite{YZ-10}), we can choose some $D \subset N$ such that
\beq\label{app-I2-eq3}\pr \set{\beta > \sz} =1, \text{ where } \beta=\inf\set{t> \sz: X(t) \notin D}.\eeq
Then it follows from \eqref{app-vphi-1}, \eqref{app-I2-eq1}, and \eqref{app-I2-eq2} that
\bed \begin{aligned}
I_2(t)
 & \le \ex\left[I_{\set{ \tau_0 > t}}\vphi(X(t),\al(t))\right]\\
 & = \ex\left[I_{\set{\tau_0 > t, \beta > t}}\vphi(X(t),\al(t))\right] + \ex\left[I_{\set{\tau_0 > t, \beta \le t}}\vphi(X(t),\al(t))\right] \\
 & \le \e + \pr\set{\beta\le t}.
\end{aligned}\eed
By virtue of \eqref{app-I2-eq3}, we further obtain
$\limsup_{t\downarrow \sz} I_2(t) \le \e$. But $\e>0$ is arbitrary, it therefore follows that
\beq\label{app-I2-0} \lim_{t\downarrow \sz} I_2(t) =0.\eeq
A combination of \eqref{app-I1+I2}, \eqref{app-I1-to-0}, and \eqref{app-I2-0} leads to
\beq\label{app-contradiction-eq1}
 \ex \left[I_{\set{\tau_0 < \tau}} \vphi(X(\tau_0),\al(\tau_0))\right]=0.
\eeq

Step 3. Now we set $A= \set{\tau_0 < \tau, X(\tau_0) \in G}$. Then it is obvious that
$$\set{\sup_{t\in[\sz,\tau)}\dist(a,X(t)) > \delta} \subset A.$$
This, together with \eqref{app-dist>delta}, implies that $\pr(A) > 0$. Therefore it follows from condition (i) that
\beq\label{app-contradiction-eq2} \ex\left[I_{\set{\tau_0 < \tau}}\vphi(X(\tau_0),\al(\tau_0))\right] \ge \ex\left[\vphi(X(\tau_0),\al(\tau_0)) I_A\right] > 0.\eeq
Finally, the contradiction between \eqref{app-contradiction-eq1} and \eqref{app-contradiction-eq2} implies that $a$ must be a regular boundary point.
\end{proof}

\begin{rem}
If the process $(X,\al)$ is assumed to be strong Feller (\cite{ZY-09b}), then we can show that the conditions in \thmref{thm-reg-pt} are also necessary. The argument is similar to that in \cite[Chapter 13]{Dynkin-II}. We shall omit the details here.
\end{rem}

\bibliography{refs}

\end{document}